\def\cleardoublepage{\clearpage\if@twoside \ifodd\c@page\else%
         \hbox{}%
     \thispagestyle{empty}
     \newpage%
     \if@twocolumn\hbox{}\newpage\fi\fi\fi}
\let\cleardoublepage\clearpage
\newtheorem{thm}{Theorem}[section]
\newtheorem{cor}[thm]{Corollary}
\newtheorem{lem}[thm]{Lemma}
\newtheorem{pro}[thm]{Proposition}
\newtheorem{den}[thm]{Definition}
\newtheorem{oss}[thm]{Remark}
\numberwithin{equation}{section}
\newcommand{\V}{\mathcal{V}}
\newcommand{\s}{\mathcal{S}}
\begin{document}

\title[A general nonlinear characterization of stochastic incompleteness]{A general nonlinear characterization\\ of stochastic incompleteness}

\author[Gabriele Grillo]{Gabriele Grillo*}
\address{\hbox{\parbox{5.7in}{\medskip\noindent{Gabriele Grillo, Matteo Muratori and Fabio Punzo: \\  Dipartimento di Matematica,\\
Politecnico di Milano,\\
   Piazza Leonardo da Vinci 32, 20133 Milano, Italy. \smallskip }}}}
        \email{gabriele.grillo@polimi.it}
      \email{matteo.muratori@polimi.it}
       \email{fabio.punzo@polimi.it}

\author{Kazuhiro Ishige}
\address{\hbox{\parbox{5.7in}{\medskip\noindent{Kazuhiro Ishige: \\ Graduate School of Mathematical Sciences,\\
The University of Tokyo,\\
  3-8-1 Komaba, Meguro-ku, Tokyo 153-8914, Japan. \smallskip }}}}
\email{ishige@ms.u-tokyo.ac.jp}

\author[Matteo Muratori]{Matteo Muratori}

\author{Fabio Punzo}

	\makeatletter
	\@namedef{subjclassname@2020}{\textup{2020} Mathematics Subject Classification}
	\makeatother

\subjclass[2020]{Primary: 58J90. Secondary:  35J60; 35K55; 53C21; 58J35; 58J65.}

\keywords{Stochastic incompleteness; noncompact manifolds; nonlinear elliptic equations; nonlinear diffusion equations; uniqueness and nonuniqueness properties. \\ \indent *Corresponding author.}

\maketitle

\begin{abstract}
Stochastic incompleteness of a Riemannian manifold $M$ amounts to the nonconservation of probability for the heat semigroup on $M$. We show that this property is equivalent to the existence of nonnegative, nontrivial, bounded (sub)solutions to $\Delta W=\psi(W)$ for one, hence all, general nonlinearity $\psi$ which is only required to be continuous, nondecreasing, with $\psi(0)=0$ and $\psi>0$ in $(0,+\infty)$. Similar statements hold for (sub)solutions that may change sign. We also prove that stochastic incompleteness is equivalent to the nonuniqueness of bounded solutions to the nonlinear parabolic equation $\partial_t u =\Delta\phi(u)$ with bounded initial data for one, hence all, general nonlinearity $\phi$ which is only required to be continuous, nondecreasing and nonconstant. Such a generality allows us to deal with equations of both fast-diffusion and porous-medium type, as well as with the one-phase and two-phase classical Stefan problems, which seem to have never been investigated in the manifold setting.


\vskip8pt
\noindent\textsc{Résumé}.  L'incomplétude stochastique d'une variété riemannienne $M$
équivaut à la non-con\-ser\-vation de la probabilité pour
le semi-groupe de la chaleur sur $M$. Nous montrons que cette propriété est équivalente
à l'existence de (sous-)\-solutions bornées, non négatives
et non triviales de l'équation $  \Delta W = \psi(W)$ 
pour une, et donc pour toute, non-linéarité générale $\psi$,
supposée simplement continue, croissante,
avec $\psi(0) = 0$ et $\psi > 0$ sur $(0, +\infty)$.
Des énoncés analogues valent pour des (sous-)solutions
qui peuvent changer de signe. Nous démontrons également que l'incomplétude stochastique
est équivalente à la non-unicité de solutions bornées
de l'équation parabolique non linéaire
$\partial_t u = \Delta \varphi(u)$ 
avec des données initiales bornées, pour une, et donc
pour toute, non-linéarité générale $\varphi$ supposée
simplement continue, croissante et non constante. Un tel degré de généralité nous permet d'aborder
des équations à la fois de type diffusion rapide
et de type milieu poreux, ainsi que les problèmes classiques
de Stefan à une ou deux phases, qui, à notre connaissance,
n'ont encore jamais été étudiés dans le contexte des variétés.

\end{abstract}

\section{Introduction}

Let $M$ be a noncompact Riemannian manifold of dimension $N\ge2$, not necessarily complete. It is said to be \emph{stochastically complete} if $T_t1=1$ for every $t>0$, where $T_t$ is the \emph{heat semigroup} on $M$. In terms of the minimal heat kernel $k(t,x,y)$ of $M$, this property reads
\begin{equation}\label{cons}
\int_M k(t,x,y) \, d\V(y) =1  \qquad \forall (x,t) \in M \times (0,+\infty) \, ,
\end{equation}
where $ \V $ is the Riemannian volume measure of $M$, and can equivalently be stated by asserting that the lifetime of \emph{Brownian paths} is a.s.~infinite. On the contrary, $M$ is said to be \emph{stochastically incomplete} if the above integral is strictly smaller than $ 1 $ for some, hence all, $ (x,t) \in M \times (0,+\infty) $. We stress that our treatment of stochastic completeness will be purely analytic, through properties of the heat equation and the associated heat kernel, rather than explicit probabilistic methods. We assume $M$ to have empty boundary, nonetheless $M$ need not be complete. 

In fact, there exist several analytic, geometric and probabilistic criteria ensuring that a manifold is stochastically (in)complete. Since it is hopeless to give an exhaustive account of the literature, we limit ourselves to quoting the following relevant papers, without any claim to completeness: \cite{A, D, G, GrigTo, H, I1, I, IM, K, MV, M, M2, PRS2, PRS,  S, T, Y}. They involve, for example, curvature conditions, in the form of sufficiently fast divergence of the sectional curvatures to $-\infty$ at spatial infinity (stochastic incompleteness), or volume growth conditions, in the form of not-too-fast divergence of the volume of geodesic balls as a function of their radius (stochastic completeness). We refer to A.~Grigor'yan's works \cite{GrigBams, Grig, GrigHK} for more comprehensive reviews; see also \cite[Appendix B]{GIM} for a concise reminder of some of such criteria.

In order to properly introduce our work, it is of particular importance to comment that, again according to a result of Grigor'yan, stochastic completeness of a manifold $M$ turns out to be equivalent to any of the following two conditions:
\begin{itemize}

\item For all $ T \in (0,+\infty] $ and all $u_0\in L^\infty(M)$, the Cauchy problem
\begin{equation}\label{calore}
\begin{cases}
{\partial_t u}=\Delta u &\text{in}\ M\times(0,T) \, , \\
u = u_0 & \text{on} \ M \times \{0\} \, ,
\end{cases}
\end{equation}
admits a unique solution in $L^\infty(M\times(0,T))$;

\smallskip

\item For all $\lambda>0$ the equation $\Delta U = \lambda U$ does not admit any (nonnegative) nontrivial bounded solution.

\end{itemize}

It can also be shown, using for instance \cite[Theorem 8.18]{GrigHK}, that stochastic \it incompleteness \rm is in turn equivalent to any of the following conditions, which in principle is stronger than the mere negation of the above properties:

\begin{itemize}

\item For all $ T \in (0,+\infty] $ and all $u_0\in L^\infty(M)$, the Cauchy problem \eqref{calore} admits at least two solutions in $L^\infty(M\times(0,T))$;

\smallskip

\item For all $\lambda>0$ the equation $\Delta U = \lambda \, U$ admits a (nonnegative) nontrivial bounded solution.

\end{itemize}
Note that \eqref{cons} amounts to the fact that, in the special case $ u_0=1 $, the unique bounded solution to \eqref{calore} is the constant $1$.

\smallskip

The goal of the present paper is to prove general \emph{nonlinear} analogues of the above characterization results. In particular, we will show that stochastic \it incompleteness \rm is equivalent to the \it existence of a nonnegative, nontrivial, bounded (sub)solution \rm to the nonlinear elliptic equation
\begin{equation}\label{eqe-intro}
\Delta W=\psi(W) \qquad \text{in } M \, ,
\end{equation}
for one, hence all, nonlinearity $\psi$ only assumed to be continuous in $[0,+\infty)$, nondecreasing, such that $\psi(0)=0$ and $ \psi>0 $ in $ (0,+\infty) $, and in fact to the existence of \it infinitely many \rm such solutions. This is the content of Theorem \ref{teo-ell}. The assumptions on the nonlinearity $\psi$ are essentially optimal, see the discussion in Remark \ref{rr}. As a direct consequence, we have that the existence of a bounded (sub)solution to \eqref{eqe-intro} as above for \emph{one} $\psi$ {ensures} that same is true for \emph{all} $\psi$. Similar statements hold for (sub)solutions that may change sign as well, see Theorem \ref{cor-gen-prs}. It is important to stress that \it boundedness \rm  is unavoidable in order to get an \it equivalence \rm result: even in $\mathbb R^N$, which is stochastically complete, it is well known that there are specific nonlinearities for which \eqref{eqe-intro} admits positive \it unbounded \rm solutions, see \cite{GMP, GMP-dcds} for some examples in the Riemannian setting as well, and \cite{MRS} for a self-contained reminder of the Euclidean case and the associated Keller-Osserman conditions, along with suitable Riemannian generalizations.

\smallskip

Besides, in the evolutionary setting, we will consider the \it filtration equation \rm
\begin{equation}\label{pp1}
\begin{cases}
{\partial_t u}=\Delta \phi(u) & \text{in } M\times(0,T) \, , \\
u = u_0 & \text{on } M \times \{ 0 \}  \, ,
\end{cases}
\end{equation}
with $ T \in (0,+\infty] $ and $u_0\in L^\infty(M)$ (possibly sign changing), where  $\phi$ is only assumed to be a continuous, nondecreasing and nonconstant function. We will show that stochastic \it incompleteness \rm is equivalent to the \it nonuniqueness of solutions in $ L^\infty(M \times (0,T)) $ to \eqref{pp1} for one, hence all, set of data $ \phi$, $ T$, $u_0 $. \rm This is the content of Theorem \ref{teo-par}. Note that \it strict \rm monotonicity is \it not \rm required. It is remarkable that this result covers simultaneously degenerate and singular nonlinear evolutions such as the \emph{porous medium equation} and \emph{fast diffusion equation} (that is $\phi(u)=|u|^{m-1}u$ with $m>1$ and, respectively, $m\in(0,1)$), the one-phase classical \emph{Stefan problem}, which corresponds to $\phi(u)=(u-L)^+$ for some $L>0$ (the latent heat), $u$ being the enthalpy and $\phi(u)$ the temperature, along with its two-phase generalization, corresponding to $\phi(u)=(u-L_1)^+-(u+L_2)^-$ for some parameters $L_1, L_2>0$.

\smallskip

In order to better explain our contribution, let us briefly discuss what was previously available in the literature. As concerns the elliptic setting, Grigor'yan's results have been widely generalized by Pigola, Rigoli and Setti in \cite{PRS2, PRS}. In particular, it is shown there that stochastic \emph{completeness} is equivalent to the nonexistence of suitable subsolutions to
\begin{equation}\label{ell}
\Delta w=f(w) \qquad \text{in } M \, ,
\end{equation}
for any $ f \in C^0(\mathbb{R})$ (not necessarily monotone), where $ w $ is required to be of class $C^2$, bounded from above and to satisfy an unavoidable condition involving the supremum of $ w^+ $. This is a very strong statement, since the assumptions on $f$ are quite general and the result involves subsolutions only. It should however be observed that:

\begin{enumerate}
\item It is not possible, from the above result, to gain information on the existence of subsolutions to \eqref{ell} when $M$ is stochastically \emph{incomplete}, for an arbitrarily given $f$. For the same reasons,  one cannot conclude that if \eqref{eqe-intro} admits no nonnegative, nontrivial, bounded solution for a \emph{single} nonlinearity $\psi$, then the same is true for all nonlinearities, and hence $M$ is stochastically complete.
\smallskip
\item The required regularity assumption on $w$ is restrictive, especially for subsolutions.
\end{enumerate}
We will deal with precise answers the points raised above in one of our main results, Theorem \ref{teo-ell}. A further extension of some of the relevant statements of \cite{PRS2, PRS} is also provided in Theorem \ref{cor-gen-prs}, see  the subsequent Remark \ref{setti}.

\smallskip
 In the recent paper \cite{GIM}, these results have been improved, although for a significantly more restrictive class of nonlinearities. In fact, it is shown there that stochastic completeness is equivalent to the nonexistence of nonnegative, nontrivial, bounded solutions to nonlinear elliptic equations of the form \eqref{eqe-intro} with $\psi$ \emph{convex}, strictly increasing (with $ \psi(0)=0 $) and $ C^1 $. This settles item (1) above, under the stated additional assumptions on $f=\psi$. We also comment that item (2) is dealt with as well, since solutions are just required to be very weak (i.e.~solutions in the sense of distributions) and the methods employed do not in principle require continuity, although elliptic regularity of course yields more than that. It should however be mentioned that results for subsolutions are not explicitly provided in \cite{GIM}, even if they may be deduced from the corresponding techniques of proof. In this framework, the prototype nonlinearity for \eqref{eqe-intro} is $\psi(W)=W^p$ with $p>1$.

The second main result in \cite{GIM} involves nonlinear generalizations of the Cauchy problem for the heat equation \eqref{calore}. In particular, uniqueness issues for  nonnegative bounded solutions to \eqref{pp1} are investigated, where $\phi=\psi^{-1}$ and $\psi$ is the nonlinearity appearing in the elliptic equation \eqref{eqe-intro}, so that $\phi$ is \emph{concave}, strictly increasing (with $ \phi(0)=0 $), continuous in $[0,+\infty)$ and $  C^1$ in $(0,+\infty)$, $ T \in (0,+\infty] $ and $u_0\in L^\infty(M)$, with $ u_0 \ge 0 $. In this case, the prototype nonlinearity in \eqref{pp1} is $\phi(u)=u^m$ with $m\in(0,1)$, namely the previously mentioned \it fast diffusion equation\rm, see \cite{DUV, V}. In fact, it is shown in \cite{GIM} that stochastic completeness is equivalent to the uniqueness of nonnegative bounded solutions (in the very weak sense) to the Cauchy problem \eqref{pp1} for one, hence all, nonlinearities $\phi$ having the just stated properties. We stress that such a result yields the very first example in which nonuniqueness of bounded solutions to the fast diffusion equation with bounded data occurs, as this is the case on all stochastically incomplete manifolds. On the contrary, we observe that in \cite{GMP-tams} it is shown that uniqueness of (nonnegative, very weak) solutions to the fast diffusion equation holds even for data that are in $L^2_{\mathrm{loc}}(M)$, on a class of (stochastically complete) manifolds satisfying suitable curvature bounds from below, which are very close to those distinguishing stochastically complete from stochastically incomplete manifolds. We also refer to the seminal paper \cite{HP} for important results originally proved in the Euclidean framework, in which uniqueness is shown for (strong) solutions taking $L^1_{\mathrm{loc}}$ initial data.

\smallskip

In the present paper, we are able to  \emph{remove} the \emph{convexity} and \emph{concavity} assumptions on $ \psi $ and, respectively, $ \phi $, which were essential in \cite{GIM}. Moreover, we can strongly relax the further hypotheses on such nonlinearities: we \emph{do not} assume any \emph{regularity} of $\psi$ and $ \phi $ beyond continuity and \emph{drop strict monotonicity}. In addition, we manage to deal with solutions \it that may change sign\rm, whereas nonnegativity was a crucial requirement for the previous techniques to work.

Let us briefly explain the main technical novelties of our approach. In \cite{GIM}, a single equivalence result was stated and proved (see Theorem 1.1 there), where the function $ \psi $ was identified with $ \phi^{-1} $. The first key point towards a more general characterization consists of uncoupling \eqref{eqe-intro} and \eqref{pp1}, thus establishing two corresponding (independent) equivalence results, namely Theorems \ref{teo-ell} and \ref{teo-par} (see below). In particular, the proof of the former is now completely self contained and merely relies on comparison principles for very weak sub- and supersolutions, along with the construction of suitable nontrivial solutions. We stress that, contrarily to \cite{GIM}, we \emph{avoid} any use of \emph{Kato-type inequalities}. This strategy turns out to be successful regardless of the convexity, regularity and strict monotonicity of $ \psi $.

As for the evolutionary problem, which is by far the most delicate one,
we use a delicate barrier method that, instead of looking directly for a ``large'' solution to \eqref{pp1} in a pointwise sense (which was the original approach of \cite{GIM}), aims at constructing a solution that satisfies a prescribed condition ``on a portion of the infinity of $M$'', in an integral sense, and this is precisely what allows us to get rid of the concavity of $ \phi $. The arbitrariness of such a condition is the one ensuring the multiplicity of solutions on a stochastically incomplete manifold. The reverse implication follows a strategy similar to that of \cite{GIM}, except that concavity is exploited only at the level of \emph{moduli of continuity} (rather than directly on $ \phi $). We also stress that, by means of our general arguments, we are able to treat \emph{sign-changing solutions}, for nonlinearities that are merely continuous and not necessarily strictly increasing. Moreover, our methods only rely on local comparison principles and the \emph{weak} regularity of the constructed solutions. For these reasons, we believe that the approach we adopt here is robust enough to be extended to rougher frameworks (e.g.~operators with nonsmooth weights). On the other hand, such a generality calls for an abstract viewpoint in order to be able to show that the solutions to the local approximating problems we want to solve (on bounded smooth domains -- see Section \ref{aux}) exist and suitably converge to a global solution in $M$. As opposed to \cite{GIM}, the latter \emph{do not} enjoy any \emph{monotonicity} property with respect to the domain, so that compactness must be proved with subtle ad-hoc arguments that are not very common in the literature related to the filtration equation. To this end, it is convenient to resort, locally, to the theories of gradient flows in Hilbert spaces and $m$-accretive operators in Banach spaces (see \cite{Bre} and \cite[Chapter 10]{V}), which provide crucial a priori estimates.

\smallskip

Finally, we mention that the evolutionary problem \eqref{pp1} has been the object of investigation of recent works when $\phi$ is \emph{convex} (i.e.~the case not covered by \cite{GIM}), especially in the specific framework of the porous medium equation $\phi(u)=u^m$ with $m>1$, see \cite{GMP, GMP2, GMP-dcds, GMV, GMV2}. In these papers, the main issues addressed are existence and uniqueness/nonuniqueness of solutions as well as their asymptotic behavior. A general theorem \it characterizing \rm uniqueness, or nonuniqueness, of solutions corresponding to bounded data, in terms of precise features of the manifolds involved, was anyway missing there. Thus, the present results are entirely new even in the special, but widely investigated, case of the porous medium equation. In this regard, we point out that as a byproduct of our methods of proof we establish a property concerning the \emph{lack of compact support} of solutions (Corollary \ref{cor-pme}), which is somewhat surprising for the porous medium equation having in mind its well-known finite speed of propagation in the Euclidean setting \cite{V}. Besides that, as discussed above, the generality in which we work allows us to also deal, for the first time, with global well-posedness issues for the one-phase classical Stefan problem on manifolds and its two-phase variant: we refer the reader to \cite{ACS,CE,CF,Du,F,F2,Gu,QV} among the extensive (Euclidean) literature.

\subsection{Statements of the main results}
First of all, we introduce the following classes of real functions:
\begin{equation*}\label{class}
	\mathfrak{C} := \left\{ \phi : \mathbb{R} \to \mathbb{R} \, :  \ \text{$\phi$ is continuous, nondecreasing and nonconstant} \right\}
\end{equation*}
and
\begin{equation*}\label{class-plus}
	\mathfrak{C}^+ := \left\{ \psi \in \mathfrak{C} :  \ \psi(0)=0 \text{ and } \psi>0 \text{ in } (0,+\infty) \right\} .
\end{equation*}
For any $ \psi \in \mathfrak{C}^+ $, we will consider the \emph{nonlinear elliptic equation}
\begin{equation}\label{eqe}
	\Delta W=\psi(W) \qquad \text{in } M \, ,
\end{equation}
and for any $ \phi \in \mathfrak{C} $, $ T \in (0,+\infty] $ and $ u_0 \in L^\infty(M)$ we will also treat the \emph{nonlinear parabolic Cauchy problem}
\begin{equation}\label{pp}
	\begin{cases}
	\partial_t	u=\Delta \phi(u) & \text{in } M \times(0,T) \, , \\
		u = u_0 & \text{on } M \times \{ 0 \} \, .
	\end{cases}
\end{equation}
As mentioned above, solutions and sub/supersolutions to \eqref{eqe} or \eqref{pp} are tacitly meant in the \emph{very weak} sense (or equivalently in the sense of distributions), according to Definitions \ref{vw} or \ref{ds} below, respectively. We refer to Remark \ref{notions-sols} for the significance of such a choice.

\smallskip

Our first result concerns the equivalence between stochastic incompleteness and the existence of nonnegative, nontrivial, bounded subsolutions, or solutions, to \eqref{eqe}.

\begin{thm}\label{teo-ell}
Let $ M $ be a noncompact Riemannian manifold. Let $ \psi \in \mathfrak{C}^+ $.
Then the following properties are equivalent:
\begin{enumerate}[(a)]
\item \label{j} $ M $ is stochastically incomplete;

\item \label{jj} The nonlinear elliptic equation \eqref{eqe} admits infinitely many nonnegative, nontrivial, bounded solutions;

\item  \label{jjj}  The nonlinear elliptic equation \eqref{eqe} admits a nonnegative, nontrivial, bounded subsolution.

\end{enumerate}
\end{thm}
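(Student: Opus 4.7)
The plan is to close the cycle $(\ref{jjj})\Rightarrow(\ref{jj})\Rightarrow(\ref{j})\Rightarrow(\ref{jj})\Rightarrow(\ref{jjj})$. The first implication is immediate (any solution is a subsolution), and the remaining three rest on two symmetric Kato-type manipulations on superlevel sets together with a standard sub/super-solution scheme on an exhaustion.

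For $(\ref{jj})\Rightarrow(\ref{j})$, start from a nontrivial nonneg bounded subsolution $W$ of \eqref{eqe}, set $M_0:=\sup_M W>0$, and fix any $c\in(0,M_0)$. On the superlevel set $\{W>c\}$, monotonicity of $\psi$ gives $\psi(W)\ge\psi(c)>0$, and since $W-c\le M_0-c$ there, one has $\psi(W)\ge\lambda(W-c)$ with $\lambda:=\psi(c)/(M_0-c)>0$. Kato's distributional inequality applied to $W-c$ then yields
\[
\Delta(W-c)^+\ \ge\ \chi_{\{W>c\}}\,\Delta W\ \ge\ \chi_{\{W>c\}}\,\psi(W)\ \ge\ \lambda\,(W-c)^+
\]
distributionally on all of $M$. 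Thus $U:=(W-c)^+$ is a nonneg, nontrivial, bounded distributional subsolution of $\Delta U\ge\lambda U$. A standard sub/super-solution scheme on a smooth exhaustion (using $U$ and the constant $\sup U$) then produces a genuine bounded nonneg nontrivial solution of $\Delta\widetilde U=\lambda\widetilde U$, so $M$ is stochastically incomplete by the Grigor'yan characterization recalled in the introduction.

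For $(\ref{j})\Rightarrow(\ref{jj})$, the same device is run in reverse. By stochastic incompleteness, for every $\lambda>0$ there exists a nontrivial nonneg bounded solution $U$ of $\Delta U=\lambda U$, which after rescaling may be assumed to satisfy $\sup U=1$. Continuity of $\psi$ together with $\psi(0)=0$ lets me pick $c\in(0,1)$ close enough to $1$ that $\psi(1-c)\le\lambda c$; then on $\{U>c\}$,
\[
\lambda U\ =\ \lambda\bigl((U-c)+c\bigr)\ \ge\ \lambda c\ \ge\ \psi(1-c)\ \ge\ \psi\bigl((U-c)^+\bigr),
\]
the last inequality using $(U-c)^+\le 1-c$ and the monotonicity of $\psi$. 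Kato's inequality then gives $\Delta(U-c)^+\ge\chi_{\{U>c\}}\lambda U\ge\psi((U-c)^+)$ distributionally on $M$, so $(U-c)^+$ is the desired nontrivial nonneg bounded subsolution of \eqref{eqe}.

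Finally, $(\ref{jj})\Rightarrow(\ref{jjj})$ is a standard monotone-exhaustion argument: given a subsolution $W$ with $B:=\sup W<\infty$, on each domain $\Omega_n$ of a smooth exhaustion of $M$ solve $\Delta V_n=\psi(V_n)$ with $V_n\equiv B$ on $\partial\Omega_n$, using the sub/supersolution pair $(W|_{\Omega_n},B)$ to get $W\le V_n\le B$; comparison for nondecreasing $\psi$ forces $V_{n+1}\le V_n$ on $\Omega_n$, so $V:=\lim_n V_n$ is a bounded nonneg nontrivial distributional solution of \eqref{eqe} on $M$. The conceptual point that dispenses with the convexity and $C^1$ hypotheses of \cite{GIM} is the superlevel-set device itself: on $\{W>c\}$ or $\{U>c\}$, $\psi$ is bounded below by a positive affine function of the relevant variable, so no structural assumption on $\psi$ near $0$ is ever invoked; the one delicate step I expect to need care is the correct use of Kato's inequality within the very weak distributional framework of Definition \ref{vw}.
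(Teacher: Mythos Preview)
Your proposal is correct and follows essentially the same approach as the paper: Kato-type inequalities on superlevel sets to pass between the linear and nonlinear equations, together with a monotone exhaustion scheme to upgrade subsolutions to solutions (this is exactly the paper's Proposition~\ref{from-sub-to-sol}). The only notable organizational difference is that the paper closes the cycle as $(\ref{j})\Rightarrow(\ref{jj})\Rightarrow(\ref{jjj})\Rightarrow(\ref{j})$, applying Kato in the last step to a \emph{solution} $W$ (so that $\Delta W=\psi(W)\in L^\infty$ and the classical Kato inequality applies directly), whereas your $(\ref{jj})\Rightarrow(\ref{j})$ applies Kato to a very weak subsolution; since you already prove $(\ref{jj})\Rightarrow(\ref{jjj})$, you could sidestep the delicacy you flag by simply running your Kato argument on the resulting solution instead.
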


\begin{oss}[Assumptions on $\psi$]\rm  \label{rr}	
Let us comment on the sharpness of the  class $ \mathfrak{C}^+ $ we consider.
	\begin{itemize}
		\item \emph{Positivity} away from $0$ is necessary. In fact, if $\psi$ vanishes in the interval $(0,b)$ for some $b>0$, then any constant lying in such an interval is a nontrivial solution to \eqref{eqe}, regardless of the stochastic incompleteness of $M$.
	
		\smallskip
		
		\item The hypothesis $\psi(0)=0$ is also crucial. Indeed, if $\psi(0)>0$ and $W$ is nonnegative, nontrivial, bounded and satisfies $\Delta W\ge\psi(W)$, then a fortiori
		\begin{equation*}
		\frac{\Delta W}{\psi(0)} \ge1 \qquad \text{in } M \, .
		\end{equation*}
		Hence, the function
		$$
		W_0 := \tfrac{\left\| W \right\|_{L^\infty(M)} - W }{\psi(0)}
		$$
		is nonnegative, nontrivial, bounded and satisfies $- \Delta  W_0 \ge 1$ in $M$. As a consequence, it is straightforward to check that $M$ is nonparabolic, i.e.~it admits a positive minimal Green function $\mathcal{G}$. Moreover, the function
		$$  \gamma (x) := \int_M \mathcal{G}(x,y) \, d\V(y) $$
		 turns out to be the minimal positive solution to $-\Delta \gamma=1$, which is thus bounded above by $ W_0 $. Therefore, from \cite[Theorem 28]{PPS} (see also \cite[Theorem 3]{Grig-old}) it follows that $M$ is not \mbox{$L^1$-Liouville} (the fact that the authors treat manifolds with boundary is immaterial). However, this property is in general \emph{stronger} than stochastic incompleteness, since \cite[Example 36]{PPS} provides the construction of a manifold which is stochastically incomplete and $L^1$-Liouville.
		
		\smallskip
		
		\item Monotonicity is unavoidable in order to be able to pass from \eqref{jjj} to \eqref{jj} in Theorem \ref{teo-ell}. In fact, the construction of a bounded \emph{solution} to \eqref{eqe} that lies above a given nontrivial subsolution strongly relies on a \emph{local comparison principle} (see Subsection \ref{subs-ell}). In turn,  the validity of such a comparison principle in the generality we treat requires $ \psi $ to be nondecreasing. Nevertheless, in Theorem \ref{cor-gen-prs} we will see that this property, along with the positivity of $W$, can be almost entirely dropped as long as subsolutions are concerned.
		
	\end{itemize}
	
\end{oss}

As an immediate consequence of Theorem \ref{teo-ell}, we can infer the following.

\begin{cor}\label{one-all}
Let $ M $ be a noncompact Riemannian manifold. Assume that the nonlinear elliptic equation \eqref{eqe} admits a nonnegative, nontrivial, bounded subsolution for some $ \psi \in \mathfrak{C}^+ $. Then it admits infinitely many nonnegative, nontrivial, bounded solutions for all $ \psi \in \mathfrak{C}^+ $.
\end{cor}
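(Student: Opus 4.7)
The plan is to derive Corollary \ref{one-all} as a direct logical consequence of Theorem \ref{teo-ell}, exploiting the key fact that the equivalence established in the theorem involves, on one side, a condition (stochastic incompleteness) that depends only on the manifold $ M $ and not on the specific choice of nonlinearity $ \psi $.

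More concretely, suppose that for some $ \psi_0 \in \mathfrak{C} $ the equation $ \Delta W = \psi_0(W) $ admits a nonnegative, nontrivial, bounded solution. Applying Theorem \ref{teo-ell} to the nonlinearity $ \psi_0 $, the implication \eqref{jjj} $ \Rightarrow $ \eqref{j} yields that $ M $ is stochastically incomplete. Now pick any other $ \psi \in \mathfrak{C} $ and apply Theorem \ref{teo-ell} again, this time to $ \psi $: the implication \eqref{j} $ \Rightarrow $ \eqref{jjj} produces a nonnegative, nontrivial, bounded solution to $ \Delta W = \psi(W) $. Since $ \psi \in \mathfrak{C} $ was arbitrary, the claim follows.

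There is essentially no obstacle here, as all the substantive analytic work is already contained in Theorem \ref{teo-ell}; the corollary is purely a ``transfer'' statement in the spirit of the classical Grigor'yan result for the linear equation $ \Delta U = \lambda U $. The only point worth emphasizing in the write-up is that the middle property \eqref{j} acts as a $ \psi $-independent pivot, so that the existence of a bounded nontrivial solution is actually a property intrinsic to $ M $, independent of the particular $ \psi \in \mathfrak{C} $ under consideration. No further computation, barrier construction, or regularity argument is required.
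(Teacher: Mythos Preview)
Your proposal is correct and matches the paper's approach: the paper presents Corollary \ref{one-all} explicitly as ``an immediate consequence of Theorem \ref{teo-ell}'' without giving any further proof, which is precisely the pivot argument through condition \eqref{j} that you spell out. Nothing more is needed.
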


With similar methods of proof, we are also able to establish a generalized version of Theorem \ref{teo-ell}, where positivity and monotonicity can be relaxed.

\begin{thm}\label{cor-gen-prs}
Let $ M $ be a noncompact Riemannian manifold. Let $ f : \mathbb{R} \to \mathbb{R} $ be a continuous function such that
\begin{equation}\label{assumpt-f}
f(w_0)=0 \qquad \text{and} \qquad f(w) > 0  \quad \forall w \in (w_0,w_1] \, ,
\end{equation}
for some $ w_1>w_0 $. Then the following properties are equivalent:
\begin{enumerate}[(a)]

\item \label{k} $ M $ is stochastically incomplete;
\setcounter{enumi}{3}
\item \label{kk} There exists $ w \in L^1_{\mathrm{loc}} (M) $ with $ {w^+} \in L^\infty(M) $ and $ f(w) \in L^1_{\mathrm{loc}} (M) $ satisfying
\begin{equation}\label{f-w}
\Delta w \ge f(w)  \qquad \text{in the sense of distributions in $M$}
\end{equation}
and
\begin{equation}\label{f-w-bis}
f(w^\ast)>0 \, , \qquad \text{where } w^*:= \underset{M}{\operatorname{ess}\sup} \ w \, .
\end{equation}

\end{enumerate}
If, in addition, $f$ is nondecreasing in $[w_0,w_1]$, then (a) and (d) are also equivalent to the following:

\begin{enumerate}[(d*)]

\item  There exist infinitely many $ w \in L^\infty (M) $ satisfying \eqref{f-w-bis} and
\begin{equation*}\label{f-w-sol}
\Delta w = f(w)  \qquad \text{ in $M$} \, .
\end{equation*}

\end{enumerate}

\end{thm}

\begin{oss}[Comparison with a result of \cite{PRS}]\label{setti}\rm
First of all, we observe that Theorem \ref{cor-gen-prs} is equivalent to asserting that $ M $ is stochastically \emph{complete} if and only if all function $ w $ as in \eqref{kk} that satisfies $ \Delta w \ge f(w)  $ is such that $ f(w^\ast) \le 0 $. Note that such equivalence holds for each \emph{fixed} nonlinearity $f$ as above. This is a stronger statement with respect to Theorem 3.1 and condition (vii) on page 43 of \cite{PRS}  (see also \cite{PPS}), in which stochastic completeness is shown to be equivalent to the fact that for \emph{all} nonlinearity $f$ and any function $ w $ such that $ \Delta w \ge f(w)  $ it holds $ f(w^*) \le 0 $. Also, we recall that the additional assumption $ w \in C^2(M) $ is required in \cite{PRS}, whereas our results basically hold under the least regularity hypotheses ensuring the meaningfulness of \eqref{f-w}; this may be especially relevant in settings where regularity is not even guaranteed for solutions, e.g.~when considering differential operators with degenerate or singular weights (see e.g.~\cite{GMPo13}).
\end{oss}

\begin{oss}[Assumptions on $f$]\label{r2}\rm
As concerns the ``local vanishing and positivity'' condition \eqref{assumpt-f}, we point out that it is actually \emph{not} needed in the proof of \eqref{kk}$ \Rightarrow $\eqref{k}. On the other hand, it is clear that there is no hope to establish the inverse implication \eqref{k}$ \Rightarrow $\eqref{kk} without further assumptions on $f$. For instance, if $ \inf f > 0 $, then an argument similar to that of Remark \ref{rr} (second item) shows that, should a function $w$ as in \eqref{kk} exist, then $M$ would not be $ L^1 $-Liouville. On the contrary, it seems an open problem to establish whether the implication \eqref{k}$ \Rightarrow $\eqref{kk} can still be proved when $ f $ has no zeros but $ \inf f = 0 $; note that this might only be possible for some function $w$ as in \eqref{kk} with $ \inf w = -\infty $, otherwise we could immediately reduce the problem to the case $ \inf f > 0 $.
\end{oss}

Our second main characterization of stochastic incompleteness is provided in terms of the multiplicity of bounded solutions to \eqref{pp} or, equivalently, of a substantial ill-posedness of the problem in the $L^\infty$ framework.

\begin{thm}\label{teo-par}
Let $ M $ be a noncompact Riemannian manifold. Let $ \phi \in \mathfrak{C} $, $ T \in (0,+\infty] $ and $ u_0 \in L^\infty(M)$. Then the following properties are equivalent:
\begin{enumerate}[(a)]
\item $ M $ is stochastically incomplete; \label{i}
\setcounter{enumi}{4}
\item \label{ii} The Cauchy problem \eqref{pp} admits infinitely many solutions in $ L^\infty(M\times(0,T)) $;
\item \label{iii} The Cauchy problem \eqref{pp} admits at least two solutions in $ L^\infty(M\times(0,T)) $.
\end{enumerate}
\end{thm}

Similarly to Corollary \ref{one-all}, from Theorem \ref{teo-par}  we can readily deduce the following.

\begin{cor}\label{one-all2}
Let $ M $ be a noncompact Riemannian manifold. Assume that the Cauchy problem \eqref{pp} admits at least two solutions in $ L^\infty(M\times(0,T)) $ for some $ \phi \in \mathfrak{C} $, some $ T \in (0,+\infty] $ and some $ u_0 \in L^\infty(M)$. Then it admits infinitely many solutions in $ L^\infty(M\times(0,T)) $ for all $ \phi \in \mathfrak{C} $, all $ T \in (0,+\infty] $ and all $ u_0 \in L^\infty(M)$.
\end{cor}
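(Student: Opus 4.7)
The plan is to derive this as an immediate consequence of Theorem~\ref{teo-par}, using stochastic incompleteness of $M$ as a bridging characterization. Since Theorem~\ref{teo-par} asserts the equivalence between stochastic incompleteness and nonuniqueness of nonnegative bounded solutions to \eqref{pp} for each fixed triple $(\phi, T, u_0)$ in the admissible classes, and since stochastic incompleteness is a property of $M$ alone that does not depend on such data, the transfer from ``for some data'' to ``for all data'' is automatic.

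First, I would apply the implication \eqref{ii}$\,\Rightarrow\,$\eqref{i} of Theorem~\ref{teo-par} to the particular triple $(\phi_0, T_0, u_{0,0})$ for which the hypothesis provides at least two distinct nonnegative solutions in $L^\infty(M \times (0,T_0))$. This step alone yields that $M$ is stochastically incomplete.

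Second, given any $\phi \in \mathfrak{L}$, $T \in (0,+\infty]$ and $u_0 \in L^\infty(M)$ with $u_0 \ge 0$, I would apply the implication \eqref{i}$\,\Rightarrow\,$\eqref{ii} of Theorem~\ref{teo-par} with this new data. Stochastic incompleteness of $M$ having already been established in the previous step, the conclusion is that the Cauchy problem \eqref{pp} relative to the chosen $(\phi, T, u_0)$ admits at least two nonnegative solutions in $L^\infty(M \times (0,T))$, which is exactly the claim. This is precisely parallel to the deduction of Corollary~\ref{one-all} from Theorem~\ref{teo-ell}, and there is no genuine obstacle since the entire content of the corollary is already encoded in Theorem~\ref{teo-par}; the only point worth emphasizing is that the theorem has been formulated with $\phi$, $T$ and $u_0$ as arbitrary parameters precisely so that such a data-independent transfer becomes formal.
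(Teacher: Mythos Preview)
Your proposal is correct and matches the paper's approach exactly: the paper simply states that Corollary~\ref{one-all2} is readily deduced from Theorem~\ref{teo-par}, which is precisely the two-step argument you describe (use \eqref{ii}$\Rightarrow$\eqref{i} for the given data to obtain stochastic incompleteness, then \eqref{i}$\Rightarrow$\eqref{ii} for arbitrary data).
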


Again, we stress that there is no available result in the literature characterizing uniqueness, or nonuniqueness, of solutions to \eqref{pp} even in the specific case of the widely studied \emph{porous medium equation}, namely $\phi(u)=|u|^{m-1}u$ for $m>1$, and {a fortiori} in the broad class of nonlinearities considered here. Interestingly, as a byproduct of our techniques, we discover a phenomenon of \emph{lack of compact-support preservation}, which is quite relevant especially in the porous-medium case, marking a striking difference with respect to the Euclidean setting.

\begin{cor}\label{cor-pme}
	Let $ M $ be a stochastically incomplete noncompact Riemannian manifold. Let $ \phi \in \mathfrak{C} $, $ T \in (0,+\infty] $ and $ u_0 \in L^\infty(M)$. Then there exist infinitely many solutions $ u \in L^\infty(M\times(0,T)) $ to the Cauchy problem \eqref{pp} such that
	\begin{equation}\label{lack-supp}
		\operatorname{supp} u \rfloor_{M \times [ s, t ]}  \not \Subset M \times [ s, t ] \qquad \, \forall t,s : \ 0 < s < t < T \, ,
	\end{equation}
	where $ \operatorname{supp} u $ is understood in the sense of measures.
\end{cor}

\begin{oss}[Stochastic \emph{completeness}]\rm
	Clearly, both Theorem \ref{teo-ell} and Theorem \ref{teo-par} may equivalently be stated in terms of stochastic \emph{completeness}. This means that, under the running assumptions on $ \psi $ and $ \phi $, on a stochastically complete manifold $ M $ the only nonnegative bounded solution to \eqref{eqe} is the constant $ W = 0 $, whereas problem \eqref{pp} is always well posed in the $ L^\infty(M \times (0,T)) $ setting.
\end{oss}

\subsection{Notation and basic definitions}\label{preliminaries}

We collect here the essential definitions of solutions to \eqref{eqe} and \eqref{pp}, as well as of sub/supersolutions, with which we will work. Before, let us introduce some basic notation and terminology that will be regularly adopted in the sequel.

\smallskip

We denote by $\V$ the Riemannian volume measure on $M$, and by $\s$ the corresponding $(N-1)$-dimensional Hausdorff measure. From here on, we will take for granted that $ M $ is a noncompact Riemannian manifold of dimension $ N \ge 2 $. For simplicity, we will also tacitly assume that $M$ is \emph{connected}, but our arguments can be easily extended to deal with the non-connected case too (see Remark \ref{non-conn} at the end of the paper). Except when it is necessary due to possible ambiguities, in order to lighten notation we will drop the explicit writing of ``a.e.''~for properties, identities or inequalities that hold almost everywhere with respect to $ \V $. As we will see in the next section, we will often deal with problems posed on sufficiently regular subsets $ E \Subset M $; for a function $f$ (or a measure) defined in $ M $ or in a set that contains $E$, we will write $ f \rfloor_E $ whenever we want to emphasize that we specifically refer to its restriction on $E$. Also, we adopt the standard notations $ H^1_0(E) $ and $ H^{-1}(E) $ to refer to the closure of $ C^\infty_c(E) $ with respect to the norm $ \| \nabla (\cdot) \|_2 $  and, respectively, its dual space. Similarly, we denote by $ H^1(E) $ the Sobolev space of $L^2(E)$ functions whose distributional gradient is also in $L^2(E)$. Clearly, if $E$ is regular enough (which will always be the case for us), the latter coincides with the closure of $ C_c^\infty\!\left( \overline{E} \right) $ with respect to the norm $  \| \cdot \|_2 +  \| \nabla (\cdot) \|_2 $.

\smallskip

Now, let us provide the following standard definition of a \emph{very weak} solution (or solution in the sense of distributions) to \eqref{eqe}. Although our main interest is for nonlinearities in $ \mathfrak{C}^+ $, for technical reasons it is convenient to temporarily work in the wider class $ \mathfrak{C} $.

\begin{den}\label{vw}
Let $ \psi \in \mathfrak{C} $. We say that a function $ W \in L^1_{\mathrm{loc}}(M) $ is a very weak solution to the nonlinear elliptic equation \eqref{eqe} if $ \psi(W) \in L^1_{\mathrm{loc}}(M) $ and it satisfies
\begin{equation}\label{def-sol-ell}
 \int_M W \, \Delta \eta  \, d\V = \int_M \psi(W) \, \eta  \, d\V
\end{equation}
for all $ \eta \in C^2_c( M ) $. Similarly, we say that a function $ W \in L^1_{\mathrm{loc}}(M) $ is a very weak subsolution (resp.~supersolution) to \eqref{eqe} if $ \psi(W) \in L^1_{\mathrm{loc}}(M) $ and \eqref{def-sol-ell} is satisfied with ``='' replaced by ``$\ge$'' (resp.~``$ \le $''), for all nonnegative $ \eta \in C^2_c( M  ) $.
\end{den}

As concerns \eqref{pp}, we can also give the usual definition of a very weak solution in which the initial datum is absorbed into the formulation itself, rather than taken with continuity.

\begin{den}\label{ds}
Let $ \phi \in \mathfrak{C} $, $ T \in (0,+\infty] $ and $ u_0 \in L^\infty(M)$. We say that a function $ u \in L^\infty(M \times (0,T)) $ is a very weak solution to the Cauchy problem \eqref{pp} if it satisfies
\begin{equation}\label{def-sol}
\int_0^T \int_M u \, \partial_t \xi  \, d\V dt + \int_0^T \int_M \phi(u) \, \Delta \xi \, d\V dt = - \int_M u_0(x) \, \xi(x,0) \, d\V(x)
\end{equation}
for all $ \xi \in C^2_c( M \times [0,T) ) $.
\end{den}

\begin{oss}[On the notions of solution] \rm \label{notions-sols}
	Both for elliptic and parabolic problems, we decided to work with the concept of \emph{very weak} solution (or sub/supersolution), which as the name suggests is to some extent the weakest notion one can employ to give a meaning to \eqref{eqe} or \eqref{pp}. A strictly related concept is that of \emph{weak solution}, namely when $W$ (resp.~$ \phi(u) $) belongs at least locally to the Sobolev space $ H^1 $, so that it is feasible to use it as a test function in \eqref{def-sol-ell} (resp.~\eqref{def-sol}). Because subsolutions to \eqref{eqe} are in particular \emph{subharmonic}, the very recent result \cite[Theorem 3.1]{PVV} ensures that they are actually weak. Nevertheless, as mentioned above, in order to keep the discussion as much as possible self-contained and open to other frameworks where such a property is unknown, we preferred not to take advantage of this fact in the proof of Theorem \ref{teo-ell}. On the contrary, to our knowledge there is no general result establishing that any given very weak solution to \eqref{pp} is necessarily weak, especially in the cases when uniqueness fails (e.g.~stochastically incomplete manifolds).
\end{oss}

In the following, in order to prove some auxiliary intermediate results, we will focus on suitable elliptic and parabolic problems posed on \emph{bounded smooth domains}. More precisely, we say that $ E \subset M $ is a bounded smooth (or regular) domain if it is a precompact, open and connected set whose boundary $ \partial E $ is an $ (N-1) $-dimensional submanifold. In this case, $ \partial E $  is naturally orientable, and it is thus well defined an outward (smooth) normal vector field that we denote by $ \nu $. We point out that, when dealing with $ \partial E $, the reference measure is implicitly assumed to be $\s$. A connected noncompact Riemannian manifold $ M $ always admits a \emph{regular exhaustion}, namely a sequence of bounded smooth domains $ \{ E_k \} \subset \mathcal{P}(M) $ such that $ E_{k} \Subset E_{k+1} $ and $ \bigcup_k E_k = M $. For further references in this regard, see e.g.~\cite[Subsection 2.1]{GMP-tams} or the monograph \cite{Lee}. Note that if $M$ is complete and there exists $ o \in M $ with empty cut locus (or even empty away from a compact set), one can simply choose $ E_k = B_k(o) $. In order to avoid redundancy, when we say ``regular exhaustion'' we implicitly mean a  \emph{regular exhaustion of $M$}.

\smallskip

The paper is organized as follows. In Section \ref{aux} we collect some crucial local comparison principles for very weak solutions, subsolutions and supersolutions to localized versions of \eqref{eqe} and \eqref{pp}, along with related fundamental existence results. The corresponding proofs are quite technical because of the distributional sense in which the problems are understood and the very general nonlinearities we treat. For the sake of readability, they have been deferred to Section \ref{comparison-ell} (elliptic) and Section \ref{comparison-par} (parabolic). The proofs of all our main results are contained in Section \ref{proof}, and basically they can be read independently of Sections \ref{comparison-ell} and \ref{comparison-par}.

\section{Auxiliary existence and comparison results for local problems}\label{aux}

This section is devoted to establishing some preliminary results that are key to the proofs of Theorems \ref{teo-ell}, \ref{cor-gen-prs} and \ref{teo-par}. More precisely, we will solve the analogues of \eqref{eqe} and \eqref{pp} on bounded smooth domains, completed with suitable boundary conditions, and show related comparison principles. We stress that establishing such properties in a \emph{very weak} setting is more involved, and requires several additional technical tools. Among them, for the reader's convenience we recall an important result from \cite{GMP-tams}, whose statement is adapted to the (simpler) version that is enough to our purposes.

\begin{pro}[Proposition 3.1 of \cite{GMP-tams}]\label{tams-prop}
Let $ M $ be a connected and noncompact Riemannian manifold.  Let $ T \in (0,+\infty] $ and $ g \in L^1_{\mathrm{loc}}(M \times [0,T)) $ satisfy
\begin{equation*}\label{p1}
\int_0^T \int_{M} g \, \Delta \xi \, d\V dt \le \mathsf{F}(\xi,\partial_t \xi)
\end{equation*}
for all nonnegative $ \xi \in C^2_c( M \times [0,T) ) $, where $ \mathsf{F} $ is a continuous functional on $ C_c(M \times [0,T)) \times  C_c(M \times [0,T)) $. Then there exists a regular exhaustion $ \{ E_k \} \subset   \mathcal{P}(M) $, possibly depending on $g$, such that
\begin{equation*}\label{p2}
\int_0^T \int_{E_k} g \, \Delta \xi \, d\V dt - \int_0^T \int_{\partial E_k} g \rfloor_{\partial E_k} \, \frac{\partial \xi}{\partial \nu} \, d\s dt \le \mathsf{F}\!\left(\overline{\xi},\partial_t \overline{\xi} \right)
\end{equation*}
for all nonnegative $\xi \in C^2_c\!\left(\overline{E}_k \times [0, T)\right)$ with $ \xi = 0 $ on $\partial E_k \times (0,T)$, for every $k \in \mathbb{N}$, where $ \overline{\xi} $ is the extension of $ \xi $ to $ M \times [0,T) $, set to be zero outside $ \overline{E}_k \times [0,T) $.
\end{pro}

We recall that the restriction $g \rfloor_{\partial E_k}$ enjoys all the properties that a ``trace'' is expected to have: in particular, and most importantly to our goals, if $ g \ge c $ (resp.~$ \le c $) holds $\V$-almost everywhere in $M$ then also $ g \rfloor_{\partial E_k} \ge c $ (resp.~$ \le c $) holds $ \s $-almost everywhere on $ \partial E_k $.

\subsection{Elliptic problems}\label{subs-ell}
We consider the Dirichlet problem
\begin{equation}\label{eq54-nonlin}
\begin{cases}
\Delta W = \psi(W) & \text{in } E \, , \\
W = \mathsf{c} & \text{on } \partial E \, ,
\end{cases}
\end{equation}
for a bounded smooth domain $ E $, a nonlinearity $\psi \in \mathfrak{C}$ and a given constant $ \mathsf{c} \in \mathbb{R} $. For simplicity, we take the boundary datum in \eqref{eq54-nonlin} to be a constant (which suffices for our goals), but of course more general traces on $\partial E$ could be prescribed.

\smallskip

Let us provide an appropriate notion of very weak (sub-, super-) solution to \eqref{eq54-nonlin}.

\begin{den}\label{very-nonlin-weak}
Let $ E \subset M $ be a bounded smooth domain. Let $ \psi \in \mathfrak{C} $ and $ \mathsf{c} \in \mathbb{R} $. We say that a function $ W \in L^1(E) $ is a very weak solution to the Dirichlet problem \eqref{eq54-nonlin} if $ \psi(W) \in L^1(E) $ and it satisfies
\begin{equation}\label{eq550}
\int_E W \, \Delta \eta \, d\V - \int_{\partial E} \mathsf{c} \, \frac{\partial \eta}{\partial \nu} \, d\s = \int_E \psi(W) \, \eta \, d\V
\end{equation}
for all $\eta\in C^2\!\left(\overline E\right)$ with $\eta=0$ on $\partial E$. Similarly, we say that a function $ W \in L^1(E) $ is a very weak subsolution (resp.~supersolution) to \eqref{eq54-nonlin} if $ \psi(W) \in L^1(E) $ and \eqref{eq550} is satisfied with ``='' replaced by ``$\ge$'' (resp.~``$ \le $''), for all nonnegative $ \eta$ as above.
\end{den}

We now state two results regarding \eqref{eqe} and \eqref{eq54-nonlin} that will be crucial in the proofs of Theorem \ref{teo-ell}, Theorem \ref{cor-gen-prs} and, partially, of Theorem \ref{teo-par}. They will be proved in Section \ref{comparison-ell}.

\begin{pro}\label{lemma-v-weak}
Let $ E \subset M $ be a bounded smooth domain. Let $ \psi \in \mathfrak{C} $ and  $ \mathsf{c} \in \mathbb{R}  $. Let $ \underline{W}$ and $\overline{W} $ be a very weak subsolution and, respectively, a very weak supersolution to the Dirichlet problem \eqref{eq54-nonlin}, in the sense of Definition \ref{very-nonlin-weak}. Then $ \underline{W} \le  \overline{W} $ in $E$.
\end{pro}

\begin{pro}\label{from-sub-to-sol}
Let $ \psi \in \mathfrak{C} $ with $ \psi(0) = 0 $. Assume that the nonlinear elliptic equation \eqref{eqe} admits a  very weak subsolution $ \underline{W} $, in the sense of Definition \ref{vw}, such that $ \underline{W}^+ \in L^\infty(M) $. Then it also admits a solution $ W $, in the sense of Definition \ref{vw}, such that
\begin{equation}\label{new-bound}
 \underline{W}^+ \le W \le \left\|  \underline{W}^+ \right\|_{L^\infty(M)}  \qquad \text{in } M \, .
\end{equation}
\end{pro}

We recall that, in the proof of Proposition \ref{from-sub-to-sol} and more generally throughout the paper, we do not take advantage of \emph{Kato-type} inequalities. As an alternative approach, one may notice that $ \underline{W}^+ $ satisfies
\begin{equation}\label{subsol-BK}
\Delta \underline{W}^+ \ge \chi_{\{ W>0 \}} \psi(W) = \psi\!\left( \underline{W}^+  \right) \qquad \text{in } M
\end{equation}
in the sense of distributions. Then, once a nonnegative and bounded subsolution such as $ \underline{W}^+ $ is available, a routine monotonicity argument (that is also used in our proof) yields a \emph{solution} complying with \eqref{new-bound}. However, as far as \emph{distributional subsolutions} are concerned, in order to rigorously obtain \eqref{subsol-BK} one needs to exploit a manifold version of the Br\'ezis-Kato inequality, which has been established only very recently in \cite[Proposition 4.1]{PVV}.

\subsection{Parabolic problems}\label{sub-pp}
For convenience, let us set
\begin{equation}\label{def-prim}
\Phi(u) := \int_0^u \phi(r) \, dr \, ,
\end{equation}
for $\phi\in\mathfrak C$. 
If $\phi(0)=0$, which is a relevant case as it will be clear shortly, due to the monotonicity of $\phi$ we have that $\phi(u) \ge  0$ for $u >  0$ and $ \phi(u) \le 0 $ for $ u<0 $. In particular, the function $\Phi$ is nonnegative and convex. In addition, for technical reasons, it is useful to define the following \emph{pseudo inverses} of $\phi$:
$$
\phi^{-1}_l(\rho) := \inf \left\{ u \in \mathbb{R} : \ \phi(u)=\rho \right\} \quad \text{and} \quad \phi^{-1}_r(\rho) := \sup \left\{ u \in \mathbb{R} : \ \phi(u)=\rho \right\}  \qquad \forall \rho \in \phi(\mathbb{R}) \, .
$$
Both functions are nondecreasing with $ \phi^{-1}_l \le \phi^{-1}_r $, and it is not difficult to check that $ \phi_l^{-1} $ is lower semi-continuous (possibly taking the value $-\infty$), whereas $ \phi_r^{-1} $ is upper semi-continuous (possibly taking the value $+\infty$). Clearly, if $ \phi $ is strictly increasing, these two functions are continuous and coincide with the actual inverse $ \phi^{-1} $.

\smallskip

Given a bounded smooth domain  $ E \subset M $, $ \phi \in \mathfrak{C} $, a time $T \in (0,+\infty]$, a constant $ \beta \in \phi(\mathbb{R}) $ and an initial datum $ u_0 \in L^\infty(E) $, we consider Cauchy-Dirichlet local problems of the form
\begin{equation}\label{eq52}
\begin{cases}
\partial_t u = \Delta  \phi(u)  & \text{in } E \times (0, T) \, , \\
\phi(u) = \beta & \text{on } \partial E \times (0,T) \, , \\
u=u_0 & \text{on }  E \times \{0\} \, .
\end{cases}
\end{equation}
Again, we limit ourselves to the case of \emph{constant} boundary data because it is more than enough to our purposes. However, we stress that more general boundary data could be treated in the spirit of \cite[Chapter 5]{V}, but this is out of scope for us.

\smallskip

Next, let us provide a suitable notion of very weak (sub-, super-) solution to \eqref{eq52}.

\begin{den}\label{pap}
Let $ E \subset M $ be a bounded smooth domain. Let $ \phi \in \mathfrak{C} $, $ T \in (0,+\infty] $, $ \beta \in \phi(\mathbb{R})$ and $ u_0 \in L^\infty(E) $. We say that a function $ u \in L^\infty(E \times (0,T)) $ is a very weak solution to the Cauchy-Dirichlet problem \eqref{eq52} if it satisfies
\begin{equation}\label{eq53}
\int_0^{T} \int_E u \, \partial_t \xi \, d\V dt + \int_0^{T}\int_E \phi(u) \, \Delta \xi \, d\V dt  =  - \int_E u_0(x) \, \xi(x,0)\, d\V(x)  +  \int_0^{T}\int_{\partial E} \beta \, \frac{\partial \xi }{\partial \nu} \, d\s dt
\end{equation}
for all $\xi \in C^2_c\!\left(\overline{E}\times [0, T)\right)$ with $ \xi = 0 $ on $\partial E \times (0,T)$. Similarly, we say that a function $ u \in L^\infty(E \times (0,T) ) $ is a very weak subsolution (resp.~supersolution) to \eqref{eq52} if \eqref{eq53} is satisfied with ``='' replaced by ``$\ge$'' (resp.~``$ \le $''), for all nonnegative $ \xi$ as above.
\end{den}

We start by stating an existence result for the \emph{homogeneous} problem \eqref{eq52}, along with some extra properties of the constructed solutions that will turn out to be key.

\begin{pro}\label{exiconstructed}
Let $ E \subset M $ be a bounded smooth domain. Let $ \phi \in \mathfrak{C} $ with $ \phi(0)=0 $, $\beta=0$ and $ u_0 \in L^\infty(E)$. Then:

\smallskip
\noindent (i) There exists a very weak solution $  u $, in the sense of Definition \ref{pap}, to the Cauchy-Dirichlet problem \eqref{eq52} with $T=+\infty$, such that
\begin{equation}\label{es100b}
	\left\| u \right\|_{L^\infty(E \times (0,+\infty))} \leq \left\| u_0 \right\|_{L^\infty(E)} .
\end{equation}

\smallskip
\noindent (ii) Furthermore, we have that
\begin{equation*}\label{eng-spaces}
 u \in AC_{\mathrm{loc}}\!\left([0,+\infty); H^{-1}(E) \right) \cap C\!\left([0,+\infty); L^1(E) \right) , \qquad \phi(u(\cdot,t)) \in H^1_0(E) \quad \forall t >0 \, ,
\end{equation*}
and the energy estimates
\begin{equation}\label{energy-local-integ}
	\int_0^t \left\| \phi(u(\cdot,s)) \right\|_{H^{1}_0(E)}^2 ds \le  \left\| \Phi(u_0) \right\|_{L^1(E)} \qquad \forall t>0
\end{equation}
and
\begin{equation}\label{energy-local}
\left\| \partial_t u(\cdot,t) \right\|_{H^{-1}(E)} = \left\| \phi(u(\cdot,t)) \right\|_{H^{1}_0(E)} \le \sqrt{\frac{ \left\| \Phi(u_0) \right\|_{L^1(E)}}{t}} \qquad \text{for a.e. } t>0
\end{equation}
hold.
	
	\smallskip
	\noindent (iii) If $ u_0 \le v_0 \in L^\infty(E) $ and we let $ v $ be the above constructed solution to the Cauchy-Dirichlet problem \eqref{eq52} with $u_0$ replaced by $v_0$, the following comparison holds:
	\begin{equation}\label{cp-classic}
	 u \le v  \qquad \text{in } E \times (0,+\infty) \, .
	\end{equation}
\end{pro}

\begin{oss}[The nonhomogeneous case]\rm \label{rem-nonhom}
When $ \phi(0) \neq 0 $ or $ \beta \neq 0 $, there is an easy way to reduce problem \eqref{eq52} to the homogeneous version treated in Proposition \ref{exiconstructed}. To this end, it is enough to choose any $ \alpha \in \phi^{-1}(\beta) $ and set $ \phi_\alpha(v):=\phi( v + \alpha) - \beta $, $ v_0 := u_0 - \alpha $ and apply Proposition \ref{exiconstructed} with $ (u_0 , \phi)$ replaced by $ \left( v_0 ,\phi_\alpha \right) $; as a result, one obtains a function $ v \in L^\infty(E \times (0,+\infty)) $ satisfying the very weak formulation
\begin{equation}\label{eq53-C}
	\int_0^{+\infty} \int_E v \, \partial_t \xi \, d\V dt + \int_0^{+\infty}\int_E \left[ \phi(v+\alpha) - \beta \right] \Delta \xi \, d\V dt  =  - \int_E \left[ u_0(x) - \alpha \right] \xi(x,0)\, d\V(x)
\end{equation}
for all $\xi \in C^2_c\!\left(\overline{E}\times [0, +\infty)\right)$ with $ \xi = 0 $ on $\partial E \times (0,+\infty)$. Upon letting $ u=v+\alpha $ and integrating by parts the integrals in \eqref{eq53-C} involving the constant terms $ \alpha $ and $ \beta $, it is plain to see that $u$ fulfills \eqref{eq53} (with $ T=+\infty $).
\end{oss}

The statement of Proposition \ref{exiconstructed} is certainly not a major novelty, but the mild assumptions we make on the nonlinearity $\phi$ call for a nonstandard construction that goes back to the abstract theory of gradient flows (or more generally maximal monotone operators) in Hilbert spaces and $m$-accretive operators in Banach spaces \cite{Bre-op,Bre,CP}, which must be properly combined to yield a family of solutions that have all the desired properties.

\smallskip

Upon taking advantage of Proposition \ref{exiconstructed} and its crucial estimates, we are able to show that solving (homogeneous) Cauchy-Dirichlet problems on an exhaustion of domains of $M$ always leads to a solution to \eqref{pp}, at least for a special class of initial data.

\begin{pro}\label{exi-sign-ch}
	Let $ \phi \in \mathfrak{C} $ with $ \phi(0)=0 $ and $ u_0 \in L^\infty(M)$ with compact support. Given any regular exhaustion $ \{ E_k \} \subset \mathcal{P}(M) $, let $ \{ u_k \} $ be the solutions to the homogeneous Cauchy-Dirichlet problems
\begin{equation*}\label{eq52-sign-ch}
	\begin{cases}
		\partial_t u_k = \Delta  \phi(u_k)  & \text{in } E_k \times (0, +\infty) \, , \\
		\phi(u_k) = 0 & \text{on } \partial E_k \times (0,+\infty) \, , \\
		u=u_0 \rfloor_{E_k} & \text{on }  E_k \times \{0\} \, ,
	\end{cases}
\end{equation*}
	provided by Proposition \ref{exiconstructed}. Then there exists a very weak solution $u$ to the Cauchy problem \eqref{pp} with $T=+\infty$, in the sense of Definition \ref{ds}, such that (up to a subsequence)
	\begin{equation}\label{prop-bre-2}
u_k \underset{k \to \infty}{\longrightarrow} u \quad \text{weakly$^*$ in } L^\infty(M \times (0,+\infty)) \qquad \text{and} \qquad \phi(u_k) \underset{k \to \infty}{\longrightarrow} \phi(u) \quad \text{in } L^2_{\mathrm{loc}}(M \times [0,+\infty)) \, ,
\end{equation}
where both $ u_k $ and $\phi(u_k)$ are implicitly extended to zero outside $E_k$.
\end{pro}

For very weak sub- and supersolutions to \eqref{eq52}, a crucial comparison principle holds. Its proof relies on a technical but by now classical duality argument, borrowed from \cite[Section 6]{V}.

\begin{pro}\label{pcpro2}
Let $ E \subset M $ be a bounded smooth domain. Let $ \phi \in \mathfrak{C} $, $T \in (0,+\infty]$, $ \beta \in \phi(\mathbb{R}) $ and $ u_0 \in L^\infty(E) $. Let $ \underline{u}$ and $ \overline{u} $ be a very weak subsolution and, respectively, a very weak supersolution to the Cauchy-Dirichlet problem \eqref{eq52}, in the sense of Definition \ref{pap}. Then
$$
\underline{u} \le \overline{u} \qquad \text{in } E \times (0,T) \, .
$$
\end{pro}

Note that, in the light of Proposition \ref{pcpro2} (and Remark \ref{rem-nonhom}), we can assert that the above constructed solution to \eqref{eq52} is \emph{unique} in the class of very weak solutions.

\smallskip

A further, and even more important consequence of Proposition \ref{pcpro2}, is that it is always possible to find a solution to \eqref{pp} which is smaller than any two given solutions. This will play a central role in the proof of Theorem \ref{teo-par}\eqref{iii}$\Rightarrow$\eqref{i}.

\begin{pro}\label{exi-mini}
Let $ \phi \in \mathfrak{C} $, $ T \in (0,+\infty] $ and $ u_0 \in L^\infty(M)$. Assume that the Cauchy problem \eqref{pp} admits two very weak solutions $ \hat u_1 $ and $ \hat u_2$, in the sense of Definition \ref{ds}. Then it also admits a very weak solution $u$ to the same Cauchy problem such that
\begin{equation}\label{minimal-ineq}
u \le \hat u_1 \wedge \hat  u_2 \qquad \text{in } M \times (0,T) \, .
\end{equation}
\end{pro}

As a byproduct of the method of proof of Proposition \ref{exi-mini}, we are able to guarantee that the \emph{smallest} possible solution to \eqref{pp}, among nonnegative bounded solutions, always exists.

\begin{pro}\label{exi-mini-real}
	Let $ \phi \in \mathfrak{C} $, $ T \in (0,+\infty] $ and $ u_0 \in L^\infty(M)$, with $ u_0 \ge 0 $. Then there exists a nonnegative very weak solution $ u $ to the Cauchy problem \eqref{pp}, in the sense of Definition \ref{ds}, called the minimal one, i.e.~such that
	\begin{equation*}\label{minimal-ineq-real}
		u \le v \qquad \text{in } M \times (0,T)
	\end{equation*}
	for any other nonnegative very weak solution $ v  $ to the same Cauchy problem. Moreover, $ u $ is independent of $ T $ and satisfies the upper bound
	\begin{equation*}\label{minimal-bd}
		u \le \left\| u_0 \right\|_{L^\infty(M)} \qquad \text{in } M \times (0,+\infty) \, .
	\end{equation*}
\end{pro}

\smallskip

The proofs of all the above results will be carried out in Section \ref{comparison-par}.

\section{Proofs for local elliptic problems}\label{comparison-ell}

\begin{proof}[Proof of Proposition \ref{lemma-v-weak}]
We employ a duality-based strategy. First of all, we observe that by subtracting the very weak formulations satisfied by $  \underline{W} $ and $ \overline{W} $  we obtain
\begin{equation}\label{vw1}
\int_E \left[ \left( \underline{W}-\overline{W} \right) \Delta \eta  + \left( \psi\!\left(\overline{W}\right) - \psi\!\left(\underline{W}\right) \right) \eta \right] d\V \ge 0 \, ,
\end{equation}
for all nonnegative $\eta\in C^2\!\left(\overline E\right)$ with $\eta=0$ on $\partial E$. Indeed, this follows from the definition of sub- and supersolutions given in Definition \ref{very-nonlin-weak}, upon subtracting the two inequalities satisfied by $\underline{W}$ and $\overline{W}$. By defining
\begin{equation*}\label{n4-ellipt}
	c(x):=
	\begin{cases}
		\frac{\psi\left(\overline{W}(x)\right) - \psi\left(\underline{W}(x) \right)}{\overline{W}(x) - \underline{W}(x)} & \text{if } \overline{W}(x) \neq \underline{W}(x) \, , \\
		0 & \text{if } \overline{W}(x) = \underline{W}(x) \, ,
	\end{cases}
\end{equation*}
we see that \eqref{vw1} can be written as
\begin{equation}\label{vw1-bis}
\int_E  \left( \underline{W}-\overline{W} \right) \left( \Delta \eta  - c \, \eta \right)  d\V \ge 0 \, ,
\end{equation}
for all $\eta $ as above. Since $\psi$ is nondecreasing and $ \underline{W},\overline{W},\psi(\underline{W}),\psi\!\left(\overline{W}\right) \in L^1(E) $, it is plain that $c$ complies with
\begin{equation*}\label{c-finite}
	0 \le c(x) < +\infty \qquad \text{for a.e. } x \in E \, .
\end{equation*}
Indeed, nonnegativity is clear from the monotonicity of $\psi$. Moreover, $c$ is a.e.\ finite since when $\overline{W}(x) = \underline{W}(x)$ by definition $c(x)=0$, and in addition $\phi\circ\overline{W}-\phi\circ\underline{W}\in L^1$, so that it is a.e.\ defined.
Hence, given an arbitrary $ \varepsilon>0 $, we can pick $ b_\varepsilon>0 $ so large that
\begin{equation}\label{c-finite-bis}
	\int_ { \left\{  c > b_\varepsilon \right\} } \left| \psi\!\left(\overline{W}\right) - \psi\!\left(\underline{W} \right) \right|  d\V < \varepsilon \, .
\end{equation}
Then, we let
$$
c_\varepsilon := c \wedge b_\varepsilon
$$
and select a sequence of nonnegative and bounded functions $ \{ c_n \} \subset C^\infty\!\left( \overline{E} \right) $ such that
\begin{equation}\label{Linf-star-C}
	\lim_{n \to \infty} c_n(x) = c_\varepsilon(x) \quad \text{for a.e. } x \in E \, , \qquad \sup_{n \in \mathbb{N}} \left\| c_n \right\|_{L^\infty(E)} < +\infty \, .
\end{equation}
For each $ n \in \mathbb{N} $, let us consider the (smooth) solution $ \eta_n $ to the following linear  Dirichlet problem:
\begin{equation}\label{dual-elliptic}
\begin{cases}
-\Delta \eta_n +  c_n \, \eta_n = \omega & \text{in } E \, ,\\
\eta_n = 0 & \text{on } \partial E \, ,
\end{cases}
\end{equation}
where $ \omega \in C^\infty_c(E) $, with $ \omega \ge 0 $, is arbitrary but fixed. Because $ c_n \ge 0 $, standard maximum principles ensure that $ 0 \le \eta_n \le C $, for a suitable positive constant $C$ depending on $E$ and $\omega$ but independent of $n$. We can now substitute $ \eta = \eta_n $ into \eqref{vw1-bis} using \eqref{dual-elliptic}, which yields
\begin{equation}\label{vw2}
\int_E \left( \overline{W}-\underline{W} \right) \omega \, d\V +  \int_E  \left( \underline{W}-\overline{W} \right) \left( c_n - c_\varepsilon \right) \eta_n \,  d\V +  \int_E  \left( \underline{W}-\overline{W} \right) \left( c_\varepsilon - c \right) \eta_n \,  d\V \ge 0 \, .
\end{equation}
We aim at showing that the rightmost terms on the left-hand side of \eqref{vw2} vanish as $ n \to \infty $ and $\varepsilon \to 0$. From the definitions of $c$, $ c_\varepsilon $, \eqref{c-finite-bis} and the uniform bound on $\eta_n$, we have:
\begin{equation}\label{vw3}
\begin{aligned}
\int_E  \left| \left( \underline{W}-\overline{W} \right) \left( c_\varepsilon - c \right) \eta_n \right|  d\V = & \, \int_{\left\{  c > b_\varepsilon \right\} }  \left| \left( \underline{W}-\overline{W} \right) \left( c - b_\varepsilon \right) \eta_n \right|  d\V \le \int_{\left\{  c > b_\varepsilon \right\} }  \left| \left( \underline{W}-\overline{W} \right) c \, \eta_n \right|  d\V \\
\le & \, C \, \int_ { \left\{  c > b_\varepsilon \right\} } \left| \psi\!\left(\overline{W}\right) - \psi\!\left(\underline{W} \right) \right|  d\V < C \varepsilon \, .
\end{aligned}
\end{equation}
On the other hand, from \eqref{Linf-star-C}, the uniform bound on $\eta_n$ and dominated convergence, we can infer that for every fixed $\varepsilon>0$
\begin{equation}\label{vw4}
\lim_{n \to \infty} \int_E  \left( \underline{W}-\overline{W} \right) \left( c_n - c_\varepsilon \right) \eta_n \,  d\V = 0 \, .
\end{equation}
Hence, upon letting first $ n \to \infty $ and then $\varepsilon \to 0 $ in \eqref{vw2}, using \eqref{vw4} and \eqref{vw3}, we end up with
$$
\int_E \left( \overline{W}-\underline{W} \right) \omega \, d\V \ge 0 \, .
$$
The thesis follows owing to the arbitrariness of $ \omega $.
\end{proof}

\begin{proof}[Proof of Proposition \ref{from-sub-to-sol}]
By assumption, $ \underline{W} $ satisfies
\begin{equation*}
 \int_M \underline{W} \, \Delta \eta  \, d\V \ge  \int_M \psi\!\left(\underline{W}\right) \eta  \, d\V
\end{equation*}
for all nonnegative $ \eta \in C^2_c( M ) $. Hence, as a special case of Proposition \ref{tams-prop}, we can infer that there exists a regular exhaustion $ \{ E_k \} \subset \mathcal{P}(M)  $ (possibly depending on $ \underline{W} $) such that
\begin{equation}\label{eq-sub-1}
\int_{E_k} \underline{W} \, \Delta \eta \, d\V - \int_{\partial E_k} \left\| \underline{W}^+ \right\|_{L^\infty(M)}  \frac{\partial \eta}{\partial \nu} \, d\s \ge \int_{E_k} \underline{W} \, \Delta \eta \, d\V - \int_{\partial E_k}  \underline{W} \rfloor_{\partial E_k}  \frac{\partial \eta}{\partial \nu} \, d\s  \ge \int_{E_k} \psi(\underline{W}) \, \eta \, d\V \, ,
\end{equation}
for every $ k \in \mathbb{N} $ and all nonnegative $ \eta \in C^2_c\!\left(\overline{E}_k \right) $ with $ \eta=0 $ on $ \partial E_k $, the leftmost inequality deriving from the fact that $ \frac{\partial \eta}{\partial \nu} \le 0 $ on $ \partial E_k $. Next, we consider the (weak) solutions $\{W_k\}$ to the following nonlinear Dirichlet problems:
\begin{equation}\label{eq-sub-2}
\begin{cases}
\Delta W_k = \psi(W_k) & \text{in } E_k \, , \\
W_k = \left\| \underline{W}^+ \right\|_{L^\infty(M)} & \text{on } \partial E_k \, ,
\end{cases}
\end{equation}
which can be constructed through a routine variational argument and satisfy
\begin{equation}\label{eq-sub-3-pre}
 \int_{E_k} W_k \, \Delta \eta \, d\V - \int_{\partial E_k}  \left\| \underline{W}^+ \right\|_{L^\infty(M)} \frac{\partial \eta}{\partial \nu} \, d\s = \int_{E_k} \psi(W_k) \, \eta \, d\V \,
\end{equation}
for all $ \eta \in  C^2_c\!\left(\overline{E}_k \right) $. Also, because $ \psi $ is nondecreasing and $ \psi(0) = 0 $, we have that the constants $ 0 $ and $ \left\| \underline{W}^+ \right\|_{\infty} $ are a subsolution and, respectively, a supersolution to \eqref{eq-sub-2}, thus classical maximum principles (or even Proposition \ref{lemma-v-weak}) yield
\begin{equation}\label{eq-sub-5}
0 \le	W_k \le \left\| \underline{W}^+ \right\|_{L^\infty(M)} \qquad \text{in } E_k \, .
\end{equation}
Thanks to \eqref{eq-sub-1} and \eqref{eq-sub-3-pre}, we are in position to apply Proposition \ref{lemma-v-weak}, which entails
\begin{equation}\label{eq-sub-4}
\underline{W} \le W_k  \qquad \text{in } E_k \, ,
\end{equation}
for every $ k \in \mathbb{N} $. Since $ W_{k+1} \in H^1(E_{k+1}) $, the latter has in particular a well defined trace on $ \partial E_{k} $ which is less than $ \left\| \underline{W}^+ \right\|_{\infty} $ in view of \eqref{eq-sub-5} (with $k$ replaced by $ k+1 $), therefore it is a (weak) subsolution to the Dirichlet boundary problem solved by $ W_{k} $, that is
\begin{equation*}
\begin{cases}
\Delta W_{k+1} = \psi(W_{k+1}) & \text{in } E_{k} \, , \\
W_{k+1} \le \left\| \underline{W}^+ \right\|_{L^\infty(M)} & \text{on } \partial E_{k} \, ,
\end{cases}
\end{equation*}
which entails $ W_{k+1} \le W_{k} $ in $ E_{k} $, again by virtue of Proposition \ref{lemma-v-weak}. The sequence $ \{ W_k \} $, extended to the value $ \left\| \underline{W}^+ \right\|_{\infty} $ in $ M \setminus E_k $, is then nonincreasing in $k$, therefore it admits a pointwise limit $ W $ which, upon letting $ k \to \infty $ in \eqref{eq-sub-5} and \eqref{eq-sub-4}, turns out to comply with \eqref{new-bound}. Finally, we observe that for each $ \eta \in C_c^2(M)  $ there exists a large enough $ k_0 $ such that $ \operatorname{supp} \eta \Subset E_k  $ for every $ k \ge k_0 $, whence \eqref{eq-sub-3-pre} yields
\begin{equation}\label{eq-sub-6}
\int_{E_k} W_k \, \Delta \eta \, d\V   = \int_{E_k} \psi(W_k) \, \eta \, d\V \, .
\end{equation}
Thanks to the just proved monotone convergence of $ W_k $ to $W$, and the continuity of $ \psi $, we can safely pass to the limit in \eqref{eq-sub-6} and deduce that $W$ is indeed a solution to \eqref{eqe} in the sense of Definition \ref{vw}.
\end{proof}

\section{Proofs for local parabolic problems}\label{comparison-par}

\begin{proof}[Proof of Proposition \ref{exiconstructed}] The existence of a very weak solution to the corresponding Cauchy-Dirichlet problem \eqref{eq52} when $ \beta=\phi(0)=0 $ can be obtained as a direct consequence of the theory of gradient flows in Hilbert spaces, applied to evolutionary PDEs, developed in \cite{Bre} (see also \cite[Section 10.1]{V}). Indeed, by virtue of \cite[Corollary 31]{Bre} (for completeness see also Theorems 17 and 23 of the same paper), for homogeneous boundary data a solution to problem \eqref{eq52} can be obtained as the gradient flow in $ H^{-1}(E) $ of the energy functional
$$
J(u) :=
\begin{cases}
\int_{E} \Phi(u) \, d\V & \text{if } u \in H^{-1}(E) \cap L^1(E) \, , \\
+ \infty & \text{elsewhere} \, ,
\end{cases}
$$
where $ \Phi $ has been defined in \eqref{def-prim}. We recall that a function $u\in L^1(E)$ belongs to  $H^{-1}(E)  $ if and only if the map
$$
\eta \mapsto \int_E u \, \eta \, d\V \qquad \forall \eta \in C^\infty_c(E)
$$
can be extended to become a continuous functional on $H^1_0(E)$, see e.g.\ \cite[Example 3]{Bre}. Note that the function $ \phi $, actually called $\beta$ in \cite{Bre}, is not required to be strictly monotone (in fact also continuity could be dropped provided it is a maximal graph), but it must satisfy the condition $ \lim_{u \to \pm \infty} \phi(u) = \pm \infty $ for coercivity purposes. Nevertheless, since we are interested in bounded solutions only, we will explain below why this is not restrictive for us. Hence, such an abstract construction provides a solution to \eqref{eq52} in the sense that $u$ is a curve in $ C\!\left([0,+\infty);H^{-1}(E)\right)$ satisfying $ u(0)=u_0 $,
$$ u \in L^\infty_{\mathrm{loc}}\!\left((0,+\infty);L^1(E)\right) , \quad \partial_t u \in L^\infty_{\mathrm{loc}}\!\left((0,+\infty);H^{-1}(E)\right)  , \quad  \phi(u) \in L^\infty_{\mathrm{loc}}\!\left((0,+\infty);H^{1}_0(E)\right) , $$
along with the key identity
\begin{equation}\label{H-1-bre}
_{H^{-1}(E)^{\phantom{a^a}}} \! \!\!\!\!\left\langle\partial_t  u(\cdot,t) , \eta \right\rangle_{H^1_0(E)}  = - \int_{E} \nabla \phi(u(\cdot,t)) \cdot \nabla \eta \, d\V  \qquad \text{for a.e. } t >0 \, ,
\end{equation}
for all $ \eta \in H^1_0(E) $. Moreover precisely, the identity in \eqref{energy-local} is a direct consequence of \eqref{H-1-bre}, whereas estimate \eqref{energy-local-integ} follows from the fact that the energy function $ t \mapsto J(u(\cdot,t))  $ is nonnegative, absolutely continuous on $ (0,+\infty) $, continuous on $ [0,+\infty) $ and its derivative fulfills
\begin{equation}\label{H-1-bre-bis}
\frac{d}{dt} J(u(\cdot,t))  = - \int_E \left| \nabla \phi(u(\cdot,t)) \right|^2 d\V \qquad \text{for a.e. } t >0 \, .
\end{equation}
Note that \eqref{H-1-bre-bis} is formally due to the chain rule $ \frac{d}{dt} \Phi(u) = \partial_t u \, \phi(u) $ combined with \eqref{H-1-bre}, but it can be rigorously justified owing to \cite[Theorem 3.6]{Bre-op}. As for the rightmost estimate in \eqref{energy-local}, it is due to the fact that $ \phi(u(\cdot,t)) \in H^1_0(E) $ for every $ t>0 $ and the ``slope'' function
$$
t \mapsto \left\| \phi(u(\cdot,t)) \right\|_{H^1_0(E)}
$$
is nonincreasing. Again, this can be formally seen by using $ \eta = \frac  1  2 \, \partial_t u \, \phi'(u) $ as a test function in \eqref{H-1-bre} (provided $\phi$ is differentiable), and rigorously justified according to \cite[Lemma 3.1 and Theorem 3.7]{Bre-op}. Upon exploiting such a monotonicity property in \eqref{energy-local-integ}, the claimed estimate readily follows. In particular, because $ J(u_0)= \|\Phi(u_0) \|_{L^1(E)} < +\infty $ and $ t \mapsto {1}/{\sqrt t} $ is locally integrable, we can also deduce that $ u \in AC_{\mathrm{loc}}\!\left([0,+\infty); H^{-1}(E) \right) $.

The fact that $ u $ satisfies the very weak formulation \eqref{eq53} is due to \eqref{H-1-bre}. Indeed, if we take an arbitrary $ \xi \in H^1_{\mathrm{loc}}\!\left((0,+\infty);H^{1}_0(E)\right)  $, upon testing \eqref{H-1-bre} with $ \eta = \xi(\cdot,t) $ and integrating in time we end up with the formula
\begin{equation*}\label{H-2-bre}
\begin{aligned}
& \, \int_{t_1}^{t_2} \int_E u \, \partial_t \xi \, d\V dt - \int_{t_1}^{t_2} \int_{E} \nabla \phi(u(\cdot,t)) \cdot \nabla \xi \, d\V dt \\
= & \, \int_E u(x,t_2) \, \xi(x,t_2)\, d\V(x) - \int_E u(x,t_1) \, \xi(x,t_1)\, d\V(x) \, ,
\end{aligned}
\end{equation*}
for every $t_2>t_1>0$. Also, from \eqref{energy-local-integ} we have that $  \phi(u) \in L^2\!\left((0,+\infty);H^{1}_0(E)\right) $, thus it is possible to send $ t_1 \to 0 $ in \eqref{H-1-bre} to obtain
\begin{equation}\label{h1-to-very-weak}
\int_0^{+\infty} \int_E u \, \partial_t \xi \, d\V dt - \int_0^{+\infty}\int_E \nabla \phi(u(\cdot,t)) \cdot \nabla \xi \, d\V dt  =  - \int_E u_0(x) \, \xi(x,0)\, d\V(x)
\end{equation}
for all $ \xi \in H^1\!\left((0,+\infty);H^{1}_0(E)\right)  $ that are identically zero eventually (in time). Clearly, any $ \xi \in C^2_c\!\left(\overline{E}\times [0, +\infty)\right)$ with $ \xi = 0 $ on $\partial E \times (0,+\infty)$ falls within this class, so that an integration by parts of the right term on the left-hand side of \eqref{h1-to-very-weak} shows that $ u $ does comply with \eqref{eq53}.

What we are missing is the proof of the uniform bound \eqref{es100b}, the continuity property in $ L^1(E) $ and the comparison principle \eqref{cp-classic}. To these aim, it is useful to change viewpoint and interpret $u$ as the (locally uniform) limit in $ C\!\left([0,+\infty); L^1(E)\right) $, as $ h \to 0^+, $ of the $ L^1(E) $ curves obtained by recursively solving the elliptic problems
\begin{equation}\label{ee-disc}
\begin{cases}
- \Delta \phi(u_{k+1}) + \frac{u_{k+1}}{h} = \frac{u_k}{h} & \text{in } E \, , \\
\phi(u_{k+1})=0 & \text{on } \partial E \, ,
\end{cases}
\end{equation}
and converting the discrete sequence $\{ u_k \}$ into a piecewise-constant function $ u_{(h)} $, on time intervals of size $ h $. In this way one can obtain solutions to \eqref{eq52} by resorting to the theory of $m$-accretive operators in Banach spaces (see \cite[Sections 10.2 and 10.3]{V} -- the reference Banach space here is $L^1(E)$); on the other hand, for the kind of initial data we consider, problem \eqref{ee-disc} coincides with the resolvent equation (Yosida approximation) exploited in \cite{Bre-op, Bre} to construct the above $H^{-1}(E)$ gradient flow, hence the limit object is exactly the same. The advantage is that these solutions satisfy the bounds
\begin{equation}\label{ee-disc-contr}
\left\| u_{(h)} \right\|_{L^\infty(E \times (0,+\infty))} \le \left\| u_0 \right\|_{L^\infty(M)}=:\ell \qquad \forall h >0 \, ,
\end{equation}
as a consequence of standard $ L^p(E) $ non-expansivity results for \eqref{ee-disc} (we refer e.g.~\cite[Propositions 1 and 2]{CP}), so the same bound holds for $u$ upon letting $h \to 0^+$. In particular, we observe that the behavior of $ \phi $ in the complement of the interval $[-\ell,\ell] $ is immaterial (provided monotonicity is preserved), hence one can artificially modify it there in such a way that the coercivity condition $  \lim_{u \to \pm \infty} \phi(u) = \pm \infty $ is satisfied. Likewise, the resolvent equation also enjoys the order-preserving property (see again \cite[Propositions 1 and 2]{CP}), which implies that if one takes another initial datum $ v_0 \ge u_0 $ and generates the corresponding sequence $ \{ v_k \} $  by recursively solving \eqref{ee-disc} with $ v_0 $ in the place of $u_0$, the latter is consistently pointwise ordered with respect to $ \{ u_k\} $, yielding
\begin{equation*}\label{ee-disc-contr-ordered}
u_{(h)} \le v_{(h)}  \qquad \text{in } E \times (0,+\infty) \, ,  \  \forall h >0 \, ,
\end{equation*}
with the obvious definition of $ v_{(h)} $. The comparison principle \eqref{cp-classic} then follows upon letting $h \to 0^+$. We stress that, for the validity of \eqref{ee-disc-contr}, the assumption $ \phi(0)=0 $ is crucial. At least in the case where $ \phi $ is differentiable and strictly increasing, this can be easily checked from an elementary proof, which can be obtained by using $ |u_k|^{p-1} u_{k} $ as test functions in the discrete resolvent problems (eventually letting $ p \to \infty $).
\end{proof}

The next auxiliary result will be very useful in the proof of Proposition \ref{exi-sign-ch}, in order to identify limits when we let $k \to \infty$, especially if $ \phi $ is not strictly increasing.

\begin{lem}\label{nonlin-id}
Let $ E \subset M $ be a bounded smooth domain and $ \phi \in \mathfrak{C} $. Let $ \{ u_k \} \subset L^\infty(E)  $ be a sequence such that
\begin{equation*}\label{2b-conv}
u_k \underset{k \to \infty}{\longrightarrow} u \quad \text{weakly$^\ast$ in } L^\infty(E) \qquad  \text{and} \qquad \phi(u_k) \underset{k \to \infty}{\longrightarrow} \rho \quad \text{pointwise a.e.~in } E \, .
\end{equation*}
Then $ \rho = \phi(u) $ a.e.~in $ E $.
\end{lem}
\begin{proof}
First of all we notice that, from the definitions of $ \phi^{-1}_l $ and $ \phi^{-1}_r $, we have
\begin{equation}\label{lr-inv}
\phi_l^{-1}\!\left( \phi(u_k(x)) \right) \le u_k(x)  \le \phi_r^{-1}\!\left( \phi(u_k(x)) \right) \qquad \text{for a.e. } x \in E \, ,
\end{equation}
for every $ k \in \mathbb{N} $. Thanks to the lower semi-continuity of $ \phi_l^{-1} $ and the upper semi-continuity of $ \phi_r^{-1} $, we readily infer that
\begin{equation}\label{lr-inv-2}
\phi_l^{-1}\!\left( \rho(x) \right) \le \liminf_{k \to \infty} \phi_l^{-1}\!\left( \phi(u_k(x)) \right)  \le \limsup_{k \to \infty} \phi_r^{-1}\!\left( \phi(u_k(x)) \right) \le \phi_r^{-1}\!\left( \rho(x) \right)  \qquad \text{for a.e. } x \in E \, .
\end{equation}
 Our goal is to show that
\begin{equation}\label{lr-inv-final}
	\phi_l^{-1}\!\left( \rho(x) \right) \le u(x)  \le \phi_r^{-1}\!\left( \rho(x) \right) \qquad \text{for a.e. } x \in E \,  ,
\end{equation}
whence the thesis readily follows. If we set $ \mathsf{M}:= \sup_{k \in \mathbb{N}} \| u_k \|_{\infty} $, in view of the assumptions it is plain that
$$
\left\| u \right\|_{L^\infty(E)} \le \mathsf{M}  \qquad \text{and} \qquad \left\| \rho \right\|_{L^\infty(E)} \le \left|\phi\!\left(\mathsf{M} \right) \right| \vee \left|\phi\!\left(-\mathsf{M} \right) \right| .
$$
Therefore, in order to establish \eqref{lr-inv-final}, it is enough to prove both
\begin{equation}\label{lr-inv-final_L}
	\phi_l^{-1}\!\left( \rho(x) \right) \le u(x) \qquad \text{for a.e. } x \in  A_n
\end{equation}
and
\begin{equation}\label{lr-inv-final_R}
u(x)  \le \phi_r^{-1}\!\left( \rho(x) \right) \qquad \text{for a.e. } x \in  B_n \,  ,
\end{equation}
where, for each $ n \in \mathbb{N} $ (arbitrarily large), we set
$$
A_n := \left\{ x \in E : \ \rho(x) > \phi\!\left(-\mathsf{M} \right) + \tfrac{1}{n} \right\} , \qquad B_n := \left\{ x \in E : \ \rho(x) < \phi\!\left(\mathsf{M} \right)- \tfrac{1}{n} \right\} .
$$
We will show \eqref{lr-inv-final_L} only, the proof of \eqref{lr-inv-final_R} being completely analogous. By Egorov's theorem, for every $ \varepsilon>0 $ there exists a measurable set $ \mathcal{A}_\varepsilon^n \subset A_n $ with $ \V(A_n \setminus \mathcal{A}_\varepsilon^n) < \varepsilon $, such that $ \left\{ \phi(u_k) \right\} $ converges uniformly to $ \rho $ in $ \mathcal{A}_\varepsilon^n$. In particular, we can assert that
\begin{equation*}\label{lr-inv-semi}
-\infty<\phi_l^{-1}\!\left( \phi\!\left(-\mathsf{M} \right) + \tfrac{1}{2n} \right)  < \phi_l^{-1}\!\left( \phi(u_k(x)) \right)  \qquad \forall x \in  \mathcal{A}_\varepsilon^n \, ,
\end{equation*}
for every $k$ large enough. Hence, given any measurable set $ \Omega \subset \mathcal{A}_\varepsilon^n $, thanks to  \eqref{lr-inv}, \eqref{lr-inv-2}, Fatou's lemma and the weak$^*$ convergence of $\{ u_k \}$, we obtain
$$
\int_{\Omega} \phi_l^{-1}\!\left( \rho \right) d\V \le \int_{\Omega}  \liminf_{k \to \infty} \phi_l^{-1}\!\left( \phi(u_k ) \right) d\V \le \liminf_{k \to \infty} \int_{\Omega} \phi_l^{-1}\!\left( \phi(u_k ) \right) d\V \le \lim_{k \to \infty} \int_{\Omega} u_k \, d\V = \int_\Omega u \, d\V \, ,
$$
which yields \eqref{lr-inv-final_L} by virtue of the arbitrariness of $\Omega$ and $ \varepsilon $.
\end{proof}

\begin{proof}[Proof of Proposition \ref{exi-sign-ch}]
Estimate \eqref{es100b}, applied with $E$ replaced by $E_k$ and $u$ replaced by $u_k$, reads
\begin{equation}\label{es100b-k}
	\left\| u_k \right\|_{L^\infty(E_k \times (0,+\infty))} \leq \left\| u_0 \right\|_{L^\infty(M)} ,
\end{equation}
for every $k \in \mathbb{N}$. Hence, the sequence $ \{ u_k \} $ (extended to zero outside $ E_k $) is bounded in $ L^\infty(M \times (0,+\infty)) $, therefore it admits a subsequence that converges to some function $u$ weakly$^\ast$ in $ L^\infty(M \times (0,+\infty)) $. Because $u_0$ is bounded and compactly supported, it is plain that
$$
\left\| \Phi(u_0) \right\|_{L^1(M)} < +\infty \, ;
$$
as a result, from Proposition \ref{exiconstructed}(ii) we infer the uniform estimates
\begin{equation}\label{energy-local-k}
\left\| \partial_t u_k(\cdot,t) \right\|_{H^{-1}(E_k)} = \left\| \phi(u_k(\cdot,t)) \right\|_{H^{1}_0(E_k)} \le \sqrt{\frac{ \left\| \Phi(u_0) \right\|_{L^1(M)}}{t}} \qquad \text{for a.e. } t>0 \, ,
\end{equation}
for every $k \in \mathbb{N}$, where the rightmost inequality is in fact true for every $t>0$. In particular, given any fixed $ i \in \mathbb{N} $, we have that for every $ \tau> 0 $ the sequence $ \{ u_k \} $ is (eventually) uniformly equicontinuous in $ C\!\left([0,\tau];H^{-1}(E_i) \right) $ and bounded in $ L^\infty\!\left( (0,\tau) ; L^2(E_i) \right) $. Therefore, since $ L^2(E_i) $ is compactly embedded in $ H^{-1}(E_i) $, we are in position to apply the infinite-dimensional version of the Ascoli-Arzel\`a theorem, which ensures that there exists a subsequence (that we will not relabel) such that
\begin{equation}\label{conv-loc-k}
u_k \underset{k \to \infty}{\longrightarrow} u \ \ \text{in } C\!\left([0,\tau];H^{-1}(E_i) \right) \qquad \text{and} \qquad u_k(\cdot, t) \underset{k \to \infty}{\longrightarrow} u(\cdot, t) \ \ \text{weakly in  $L^2(E_i) \, , \ \forall t \in [0,\tau] \, ,  $}
\end{equation}
where we have implicitly assumed that we are working on a subsequence where the above weak$^*$ $ L^\infty $ convergence already occurs. Now we observe that, still by virtue of \eqref{es100b-k} and \eqref{energy-local-k}, for every $t>0$ the sequence $ \left\{ \phi(u_k(\cdot,t)) \right\} $ is bounded in $ H^1(E_i) $, thus it admits a subsequence $ \left\{ \phi(u_{k_j}(\cdot,t)) \right\}_j $ that converges strongly in $ L^2(E_i) $ to some function $ \rho_t \in L^2(E_i) $. Thanks to the weak convergence in \eqref{conv-loc-k} (which is also weak$^ * $ in $ L^\infty(E_i) $), we are in position to apply Lemma \ref{nonlin-id}, ensuring that in fact $ \rho_t = \phi(u(\cdot,t)) $. Because the limit has been identified independently of the selected $j$-subsequence, we can assert that
\begin{equation}\label{conv-loc-k-bis}
\phi(u_k(\cdot,t)) \underset{k \to \infty}{\longrightarrow} \phi(u(\cdot,t)) \qquad \text{in  $L^2(E_i) \, , \  \forall t \in (0,\tau] \, .  $}
\end{equation}
As $ \left\{ \phi(u_k) \right\} $ is also bounded in $ L^\infty\!\left( (0,\tau) ; L^2(E_i) \right) $, from \eqref{conv-loc-k-bis} we easily deduce that
\begin{equation}\label{conv-loc-k-ter}
\phi(u_k) \underset{k \to \infty}{\longrightarrow} \phi(u) \qquad \text{in } L^2(E_i \times (0,\tau) ) \, .
\end{equation}
In the light of \eqref{conv-loc-k-ter}, by means of a standard diagonal procedure we can select a further subsequence such that \eqref{prop-bre-2} is fulfilled. We are only left with showing that $ u $ is a very weak solution to \eqref{pp}. To this aim, it is enough to take an arbitrary $ \xi \in C^2_c( M \times [0,+\infty) ) $, write the very weak formulation satisfied by each $u_k$ (with $k$ so large that $E_k \times [0,+\infty)$ contains the support of $ \xi $), that is
$$
\int_0^{+\infty} \int_{E_k} u_k \, \partial_t \xi \, d\V dt + \int_0^{+\infty}\int_{E_k} \phi(u_k) \, \Delta \xi \, d\V dt  =  - \int_{E_k} u_0(x) \, \xi(x,0)\, d\V(x) \, ,
$$
and pass to the limit as $k \to \infty$ upon taking advantage of \eqref{prop-bre-2}.
\end{proof}

\begin{proof}[Proof of Proposition \ref{pcpro2}]
The strategy of proof is exactly the same as in \cite[Theorem 6.5]{V}, and it takes advantage of a duality argument similar (although more involved) to the one employed in the proof of Proposition \ref{lemma-v-weak}. For the sake of completeness, we carry out the main computations, since this is a crucial result to our purposes. First of all we observe that, by subtracting the very weak formulations satisfied by $ \overline{u} $ and $ \underline{u} $, we obtain
\begin{equation}\label{eq-comp-diff-1}
	\int_0^{T} \int_E \left[ \left( \overline{u} - \underline{u} \right) \partial_t \xi + \left(\phi(\overline{u}) - \phi(\underline{u})  \right) \Delta \xi \right] d\V dt \le 0
\end{equation}
for all nonnegative $\xi \in C^2_c\!\left(\overline{E}\times [0, T) \right) $ with $ \xi=0 $ on $ \partial E \times (0,T) $. A standard cut-off argument in time then shows that \eqref{eq-comp-diff-1} implies
\begin{equation}\label{eq-comp-diff-2}
	\int_0^{\tau} \int_E \left[ \left( \overline{u} - \underline{u} \right) \partial_t \xi  + \left(\phi(\overline{u}) - \phi(\underline{u})  \right) \Delta \xi \right] d\V dt + \int_E \left[ \underline{u}(x,\tau)-\overline{u}(x,\tau) \right] \xi(x,\tau) \, d\V(x) \le 0 \, ,
\end{equation}
for almost every $ \tau  \in (0,T) $ and all  nonnegative $\xi \in C^2\!\left(\overline{E}\times [0, \tau] \right) $ with $ \xi=0 $ on $ \partial E \times (0,\tau) $. More precisely, one applies \eqref{eq-comp-diff-1} to the test functions $ \xi (x,t) \, \zeta_n(t) $, where is $\{ \zeta_n \}$ a smooth approximation of $ \chi_{[0,\tau]} (t)$, and lets $ n \to \infty $. Next, we introduce the function
\begin{equation*}\label{n4}
	a(x,t):=
	\begin{cases}
		\frac{\phi(\overline{u}(x,t)) - \phi(\underline{u}(x,t))}{\overline{u}(x,t) - \underline{u}(x,t)} & \text{if } \overline{u}(x,t) \neq \underline{u}(x,t) \, , \\
		0 & \text{if } \overline{u}(x,t) = \underline{u}(x,t) \, .
	\end{cases}
\end{equation*}
Note that, by construction, the nondecreasing monotonicity of $ \phi $ entails
\begin{equation}\label{a-finite}
	0 \le a(x,t) < +\infty \qquad \text{for a.e. } (x,t) \in E \times (0,T) \, ,
\end{equation}
although {a priori} $ a $ does not belong to any $L^p$ space. Hence, we can rewrite \eqref{eq-comp-diff-2} as follows:
\begin{equation}\label{eq-comp-diff-3}
	\int_0^{\tau} \int_E \left( \overline{u} - \underline{u} \right) \left[  \partial_t \xi   + a \, \Delta \xi \right] d\V dt + \int_E \left[ \underline{u}(x,\tau)-\overline{u}(x,\tau) \right] \xi(x,\tau) \, d\V(x) \le 0 \, .
\end{equation}
Given an arbitrary $ \varepsilon>0 $, we pick $ K_\varepsilon>\varepsilon $ so large that (let $\otimes$ denote the product of measures)
\begin{equation}\label{a-finite-bis}
	\left( \V \rfloor_{E} \otimes \mathcal{L}^1 \rfloor_{(0,T)} \right) \! \left( \left\{  a > K_\varepsilon \right\} \right) < \varepsilon^2 \, ,
\end{equation}
which is feasible by virtue of \eqref{a-finite}, where $ \mathcal{L}^1 $ stands for the one-dimensional Lebesgue measure. Next, we let
$$
a_\varepsilon := \left( a \vee \varepsilon \right) \wedge K_\varepsilon
$$
and pick a sequence of positive and bounded functions $ \{ a_n \} \subset C^\infty\!\left( \overline{E} \times [0,T] \right) $ such that
\begin{equation}\label{Linf-star}
	\lim_{n \to \infty} \left\| a_\varepsilon - a_n \right\|_{L^2\left( E \times (0,T) \right)} = 0 \, ,
\end{equation}
satisfying in addition the uniform bound from below
\begin{equation}\label{a-below}
	a_n \ge \frac{\varepsilon}{2} \, .
\end{equation}
Now, given an arbitrary nonnegative function $  \omega \in C^\infty_c(E)  $, for each $ n \in \mathbb{N} $ we consider the nonnegative solution $ \xi_n $ to the following dual backward parabolic problem:
\begin{equation}\label{n9}
	\begin{cases}
		\partial_t \xi_n + a_{n} \, \Delta \xi_n = 0 & \text{in } E \times (0, \tau) \, , \\
		\xi_n = 0 & \text{on } \partial E \times (0, \tau) \, , \\
		\xi_n = \omega & \text{on } E \times \{\tau\} \, .
	\end{cases}
\end{equation}
Due to the smoothness of the data and the fact that $ \operatorname{supp} \omega \Subset E $, standard parabolic regularity theory ensures that $ \xi_n $ is also a smooth function up to the boundary.
Hence, in view of \eqref{n9}, the admissible choice $ \xi=\xi_n $ in \eqref{eq-comp-diff-3} entails
\begin{equation}\label{eq-comp-diff-4}
	\int_0^{\tau} \int_E \left( \overline{u} - \underline{u} \right) \left( a - a_n \right) \Delta \xi_n  \, d\V dt + \int_E \left[ \underline{u}(x,\tau)-\overline{u}(x,\tau) \right] \omega(x) \, d\V(x) \le 0 \, .
\end{equation}
The crucial point is to suitably estimate the Laplacian of $\xi_n $, in order to handle the left term on the left-hand side of \eqref{eq-comp-diff-4}. To achieve it, we multiply the differential equation in \eqref{n9} by $\Delta\xi_{n}$ and integrate by parts in $ E \times (0, \tau)$, obtaining the identity
\begin{equation}\label{energy}
	\frac 1 2 \int_{E} \left| \nabla\xi_{n}(x,0) \right|^2  d\V(x) + \int_0^\tau \int_{E} a_{n} \left( \Delta \xi_{n} \right)^2 d\V dt = \frac 1 2 \int_{E} \left| \nabla \omega \right|^2 d\V \, .
\end{equation}
Hence, a routine application of the Cauchy-Schwarz inequality combined with \eqref{energy} yields
\begin{equation}\label{n48}
	\left|  \int_0^{\tau} \int_E \left( \overline{u} - \underline{u} \right) \left( a - a_n \right) \Delta \xi_n  \, d\V dt  \right|
	\le \frac{\left\| \nabla \omega  \right\|_{L^2(E)}}{\sqrt 2} \left[ \int_0^\tau \int_{E} \frac{\left( \overline{u} - \underline{u} \right)^2\left(a-a_{n}\right)^2}{a_{n}} \, {d}\V {d}t \right]^{\frac12} .
\end{equation}
Recalling the definition of $ a_\varepsilon $, along with  \eqref{a-finite-bis} and \eqref{a-below}, we can estimate the integral on the right-hand side of \eqref{n48} as follows:
$$
\begin{aligned}
	& \, \int_0^\tau \int_{E} \frac{\left( \overline{u} - \underline{u} \right)^2\left(a-a_{n}\right)^2}{a_{n}} \, {d}\V {d}t \\
	\le & \,  \frac{4}{\varepsilon} \int_0^\tau \int_{E} {\left( \overline{u} - \underline{u} \right)^2\left(a_\varepsilon-a_{n}\right)^2} \, {d}\V {d}t + \frac{4}{\varepsilon} \int_0^\tau \int_{E} {\left( \overline{u} - \underline{u} \right)^2\left(a-a_{\varepsilon}\right)^2} \, {d}\V {d}t \\
	\le & \, \frac{4}{\varepsilon} \left\| \overline{u} - \underline{u} \right\|_{L^\infty \left( E \times (0,T) \right)}^2  \left\| a_\varepsilon - a_n \right\|_{L^2\left( E \times (0,T) \right)}^2  + \frac{4}{\varepsilon} \iint_{ \left\{  a<\varepsilon \right\} } {\left( \overline{u} - \underline{u} \right)^2 \varepsilon^2 } \, {d}\V {d}t \\
	& + \frac{4}{\varepsilon} \iint_{ \left\{  a> K_\varepsilon \right\} } {\left( \overline{u} - \underline{u} \right)^2 a^2} \, {d}\V {d}t \\
	\le &  \, \frac{4}{\varepsilon} \left\| \overline{u} - \underline{u} \right\|_{L^\infty \left( E \times (0,T) \right)}^2  \left\| a_\varepsilon - a_n \right\|_{L^2\left( E \times (0,T) \right)}^2  + 4 \varepsilon \left\| \overline{u} - \underline{u} \right\|_{L^2 \left( E \times (0,T) \right)}^2 \\
	&  + 4 \varepsilon \left\| \phi(\overline{u}) - \phi(\underline{u}) \right\|_{L^\infty \left( E \times (0,T) \right)}^2  .
\end{aligned}
$$
Going back to \eqref{eq-comp-diff-4}, upon letting $n \to \infty $, using \eqref{Linf-star} and finally letting $ \varepsilon \to 0 $, we end up with
$$
\int_E \left[ \underline{u}(x,\tau)-\overline{u}(x,\tau) \right] \omega(x) \, d\V(x) \le 0 \, ,
$$
whence the thesis in view of the arbitrariness of $ \omega $ and $\tau$.
\end{proof}

\begin{proof}[Proof of Proposition \ref{exi-mini}]
	First of all, let
	\begin{equation}\label{eq52-k-defC}
	\mathsf{C} := \left\| u_0 \right\|_{L^\infty(M)} \vee \left\| \hat{u}_1 \right\|_{L^\infty(M \times (0,T))} \vee \left\| \hat{u}_2 \right\|_{L^\infty(M \times (0,T))} .
\end{equation}
Given an arbitrary regular exhaustion $ \left\{   E_k \right\} \subset \mathcal{P}(M)  $, we denote by $ u_k $ the corresponding solutions to the Cauchy-Dirichlet problems
\begin{equation}\label{eq52-k}
	\begin{cases}
		\partial_t u_k = \Delta  \phi(u_k)  & \text{in } E_k \times (0, +\infty) \, , \\
		\phi\!\left(u_k\right) =  \phi( - \mathsf{C} ) & \text{on } \partial E_k \times (0,+\infty) \, , \\
		u=u_0 & \text{on }  E_k \times \{0\} \,.
	\end{cases}
\end{equation}
Via the change of variables (and back) employed in Remark \ref{rem-nonhom}, with $ \alpha=-\mathsf{C} $, thanks to Proposition \ref{exiconstructed}(i) and (iii) we can assert that such solutions exist and comply with the uniform bounds
\begin{equation}\label{bound-k}
- \mathsf{C} \le	u_k  \leq \left\| u_0 + \mathsf{C} \right\|_{L^\infty(M)} - \mathsf{C}  \qquad \text{in } E_k \times (0, +\infty) \, ,
\end{equation}
for every $k \in \mathbb{N} $, where in the leftmost inequality of \eqref{bound-k} we used the fact that $ u_0 + \mathsf{C} \ge 0 $, thus $ u_k + \mathsf{C} \ge 0 $ owing to \eqref{cp-classic}. More specifically, they satisfy the very weak formulation of \eqref{eq52-k} according to Definition \ref{pap}, that is
\begin{equation}\label{eq53-minimal}
	\begin{aligned}
	\int_0^{+\infty} \int_{E_k} u_k \, \partial_t \xi \, d\V dt + \int_0^{+\infty}\int_{E_k} \phi(u_k) \, \Delta \xi \, d\V dt  =
	& \, - \int_{E_k} u_0(x) \, \xi(x,0)\, d\V(x) \\
	& \, +  \int_0^{+\infty}\int_{\partial E_k} \phi( - \mathsf{C} ) \, \frac{\partial \xi }{\partial \nu} \, d\s dt
	\end{aligned}
\end{equation}
for all $\xi \in C^2_c\!\left(\overline{E}_k \times [0, +\infty)\right)$ with $ \xi = 0 $ on $\partial E_k \times (0,+\infty)$. On the other hand, the energy estimate \eqref{energy-local-integ} ensures that
$$
\phi( u_k ) \in L^2 \! \left( (0,+\infty) ; H^1(E_k) \right) ,
$$
so that each $ \phi(u_k) $ has a well-defined trace on $ \partial E_k $ and more generally on $ \partial D  $ for every bounded smooth domain  $ D \Subset E_k $. As a result, it also satisfies
\begin{equation}\label{eq53-minimal-k}
	\begin{aligned}
		& \, \int_0^{+\infty} \int_{D} u_k \, \partial_t \xi \, d\V dt + \int_0^{+\infty}\int_{D} \phi(u_k) \, \Delta \xi \, d\V dt \\
		 = 		&  - \int_{D} u_0(x) \, \xi(x,0)\, d\V(x)  +  \int_0^{+\infty}\int_{\partial D} \phi( u_k ) \rfloor_{\partial D} \, \frac{\partial \xi }{\partial \nu} \, d\s dt \\
		 \le &  - \int_{D} u_0(x) \, \xi(x,0)\, d\V(x)  +  \int_0^{+\infty}\int_{\partial D} \phi( -\mathsf{C} )  \, \frac{\partial \xi }{\partial \nu} \, d\s dt
	\end{aligned}
\end{equation}
for every $D$ as above and all nonnegative $\xi \in C^2_c\!\left(D \times [0, +\infty)\right)$ with $ \xi = 0 $ on $\partial D \times (0,+\infty)$, where in the last inequality we took advantage of the lower bound in \eqref{bound-k}, along with the monotonicity of $\phi$ and the fact that $ \frac{\partial \xi }{\partial \nu} \le 0 $. Hence, by using \eqref{eq53-minimal-k} with $ D=E_{k-1} $, we infer that $ u_{k} $ is a very weak supersolution to problem \eqref{eq52-k} posed in $ E_{k-1} $ in the place of $ E_k $; therefore, a direct application of Proposition \ref{pcpro2} yields
\begin{equation}\label{bound-k+1}
	u_{k-1} \leq u_{k}  \qquad \text{in } E_{k-1} \times (0, +\infty) \, .
\end{equation}
The claimed solution $ u $ is simply obtained as $ u= \lim_{k \to \infty} u_k  $, where this limit is pointwise monotone in $k$ thanks to \eqref{bound-k+1}, up to extending each $ u_k $ to the value $ -\mathsf{C} $ outside $ E_k $, and uniformly bounded by \eqref{bound-k}. The fact that $u$ is indeed a very weak solution to \eqref{pp} (with $ T=+\infty $ actually), in the sense of Definition \ref{ds}, is readily seen by passing to the limit in the very weak formulations \eqref{eq53-minimal} tested against all fixed $\xi \in C^2_c( M \times [0,+\infty) )$, provided $ k $ is so large that $ \operatorname{supp} \xi \Subset E_k \times [0,+\infty) $.

We are left with proving that $u$ complies with \eqref{minimal-ineq}. Before, it is useful to notice that the limit function $u$ is independent of the exhaustion considered. To this aim, let $ \{ D_j \} \subset \mathcal{P}(M) $ be another regular exhaustion, and to avoid ambiguity let $ \left\{ \tilde u_j \right\} $ denote the analogues of the solutions $ \{ u_k \} $ to \eqref{eq52-k}, with $D_j$ replacing $ E_k $. We may then call $ \tilde{u} $ the corresponding limit solution obtained as above. Due to the definition of a regular exhaustion of $ M $, it is plain that we can find an increasing sequence\ $ \left\{ j_k \right\}$ such that for every $  k \in \mathbb{N} $ the precompact inclusion $E_k \Subset D_{j_k}$ holds. Therefore, the same argument that led to \eqref{bound-k+1} ensures that
$$
u_k \le \tilde{u}_{j_k} \qquad \text{in } E_k \times (0,+\infty) \, ,
$$
whence, by taking limits as $ k \to \infty $, we end up with
$$
u \le \tilde{u} \qquad \text{in } M \times (0,+\infty) \, .
$$
Upon reversing the roles of $ \{ E_k \} $ and $ \{ D_j \} $ we infer the opposite inequality as well, thus $ u=\tilde{u} $. Next we apply Proposition \ref{tams-prop} to the given solution $\hat{u}_{i}$ ($i=1,2$), which ensures the existence of a regular exhaustion $ \{ D_j \} \subset \mathcal{P}(M) $, possibly depending on $\hat{u}_i$, such that
\begin{equation}\label{es2002}
\begin{aligned}
 &  \int_0^{T}\int_{D_j} \hat{u}_i \, \partial_t \xi \, d\V dt +  \int_0^{T} \int_{D_j} \phi\!\left( \hat{u}_i \right) \Delta \xi \, d\V dt \\
 = &  - \int_{D_j} u_{0}(x,0) \, \xi(x,0) \, d\V(x) + \int_0^{T} \int_{\partial D_j} \phi\!\left( \hat{u}_i \right) \rfloor_{D_j} \frac{\partial \xi}{\partial \nu} \, d\s dt  \\
\le  &  - \int_{D_j} u_{0}(x,0) \, \xi(x,0) \, d\V(x) + \int_0^{T} \int_{\partial D_j} \phi( -\mathsf{C} ) \, \frac{\partial \xi}{\partial \nu} \, d\s dt \, ,
 \end{aligned}
\end{equation}
for every $ j \in \mathbb{N} $ and all nonnegative test function $ \xi \in C^2_c\!\left( \overline{D}_j \times [0,T) \right)  $ with $ \xi = 0 $ on $\partial D_j \times (0,T)$, where in the last passage we used \eqref{eq52-k-defC}, the monotonicity of $ \phi $ and again the fact that outer normal derivative of $ \xi $ on $ \partial D_j $ is nonpositive. Similarly to \eqref{eq53-minimal-k}, inequality \eqref{es2002} establishes that $ \hat{u}_i \rfloor_{D_j \times (0,T)} $ is a very weak supersolution, in the sense of Definition \ref{pap}, to the Cauchy-Dirichlet problem satisfied by $ \tilde{u}_j $ (keep in mind the above notation). Hence, Proposition \ref{pcpro2} is again applicable, ensuring that
	\begin{equation}\label{u-j-lim}
	\tilde{u}_j	\le \hat{u}_i \qquad \text{in } D_j \times (0,T)
	\end{equation}
for every $ j \in \mathbb{N} $. The thesis then follows by taking limits as $j \to \infty $ in \eqref{u-j-lim}, recalling that $ \{ \tilde{u}_j \} $ converges to $u$.
\end{proof}

\begin{proof}[Proof of Proposition \ref{exi-mini-real}]
	The argument follows closely the ideas behind the proof of Proposition \ref{exi-mini}. Indeed, it is enough to construct the claimed minimal solution as the pointwise limit of the solutions $ u_k $ to the homogeneous Cauchy-Dirichlet problems (let $ E_k \subset \mathcal{P}(M) $ be a given regular exhaustion)
	\begin{equation*}\label{eq52-k-hom}
		\begin{cases}
			\partial_t u_k = \Delta  \phi(u_k)  & \text{in } E_k \times (0, +\infty) \, , \\
			\phi\!\left(u_k\right) =  0 & \text{on } \partial E_k \times (0,+\infty) \, , \\
			u=u_0 \ge 0 & \text{on }  E_k \times \{0\} \, ,
		\end{cases}
	\end{equation*}
	assuming with no loss of generality that $ \phi(0)=0 $. From Proposition \ref{exiconstructed}(i) and (iii), we can readily infer that
	\begin{equation*}\label{bound-k-positive}
		0  \le	u_k  \leq \left\| u_0  \right\|_{L^\infty(M)}  \qquad \text{in } E_k \times (0, +\infty) \, ,
	\end{equation*}
for every $ k \in \mathbb{N} $. Moreover, the monotonicity inequalities \eqref{bound-k+1} still hold. As a result, the nonnegative and bounded function $ u = \lim_{k \to \infty} u_k $ is a very weak solution to \eqref{pp}. The fact that $u$ is smaller than any other nonnegative very weak solution $v$ to the same Cauchy problem just follows by showing that $ u $ is independent of the chosen exhaustion, and repeating the final part of the proof of Proposition \ref{exi-mini} with $v$ replacing $\hat{u}_i$. It is plain that $T$ plays no role in the construction of $u$. In the case $ \phi(0) \neq 0 $, it is enough to apply the above argument to $ \phi_0 := \phi - \phi(0) $, upon observing that $ u,v $ are very weak solutions to \eqref{pp} if and only if they are very weak solutions to the same problem with $ \phi $ replaced by $ \phi + c $, for any constant $ c \in \mathbb{R} $.
\end{proof}

\section{Proofs of the main results}\label{proof}

In this section, we will first establish Theorem \ref{teo-ell} (along with Theorem \ref{cor-gen-prs}) and then Theorem \ref{teo-par}, as the latter, at least in one implication, takes advantage of the former.

\begin{proof}[Proof of Theorem \ref{teo-ell}]
\ \smallskip \\
\noindent \eqref{j}$ \Rightarrow $\eqref{jj}  Since $\psi$ is, in particular, locally bounded, for every $ A>0 $ we can find $K>0$ so large that
\begin{equation}\label{eq1}
\psi(w) \leq  K A  \qquad \forall  w \in [0,2A] \, .
\end{equation}
In view of hypothesis \eqref{j}, and using \cite[Theorem 8.18]{GrigHK}, we can assert that there exists a nonnegative and bounded function $U$ such that
\begin{equation*}
\Delta U =  K U \quad \text{in } M \qquad \text{and} \qquad \left\| U \right\|_{L^\infty(M)} =  2A \, .
\end{equation*}
Let us consider the function
$$
 \underline{W} := U - A \, .
$$
Due to \eqref{eq1}, the fact that $ 0 \le U \le 2A$ and the monotonicity of $\psi$, we have:
\begin{equation}\label{eq6-ter}
\Delta \underline{W} = \Delta U = KU \ge  K A \ge \psi(U) \ge \psi\!\left(U-A \right) = \psi(\underline{W}) \qquad \text{where } U>A
\end{equation}
and
\begin{equation}\label{eq6-quater}
\Delta \underline{W} = \Delta U = KU \ge 0 \ge \psi\!\left(U-A \right) = \psi(\underline{W}) \qquad \text{where } U \le A \, ,
\end{equation}
recalling that $ \psi(0)=0 $ and thus $ \psi(w) \le 0 $ for $ w \le 0 $. As a result, we can deduce that the function $ \underline{W} $ satisfies
$$
\Delta \underline{W}  \ge \psi(\underline{W}) \qquad \text{in } M \, ,
$$
namely it is a very weak subsolution to \eqref{eqe} (in fact it is more regular than that). Moreover, by construction,
\begin{equation*}\label{eq6-AA}
\left\| \underline{W}^+ \right\|_{L^\infty(M)} = A \, .
\end{equation*}
Hence, we are in position to invoke Proposition \ref{from-sub-to-sol}, which guarantees that a solution $ W $ to \eqref{eqe} exists and fulfills \eqref{new-bound}. In particular, $W$ is nonnegative and complies with the identity $ \left\| W \right\|_{\infty} = A $. Because $ A>0 $ is a free parameter, different values of $A$ yield different solutions, thus \eqref{jj} follows.

\smallskip
\noindent \eqref{jj}$ \Rightarrow $\eqref{jjj}  It is obvious.

\smallskip
\noindent \eqref{jjj}$ \Rightarrow $\eqref{j} Hypothesis \eqref{jjj} ensures the existence of a nonnegative, nontrivial, bounded function $W$ satisfying
\begin{equation*}\label{eq7}
\Delta W \ge \psi(W) \qquad \text{in }  M \, .
\end{equation*}
Let us set
\[
\mathsf{c}:= \left\| W \right\|_{L^\infty(M)} > 0 \,  ,
\]
and define
\[
\underline{U} := W-\tfrac{\mathsf{c}}{2} \, .
\]
Similarly to \eqref{eq6-ter} and \eqref{eq6-quater}, we have:
\begin{equation}\label{eqA6-1}
\Delta \underline{U} = \Delta W \ge \psi(W) \ge \psi\!\left( \tfrac{\mathsf c}{2} \right) = \frac{\psi\!\left( \tfrac{\mathsf c}{2} \right) }{\tfrac {\mathsf c} 2} \cdot \tfrac{\mathsf c}{2} \ge \frac{\psi\!\left( \tfrac{\mathsf c}{2} \right) }{\frac {\mathsf c} 2} \,  \underline{U} \qquad \text{where } W>\tfrac{ \mathsf c } 2
\end{equation}
and
\begin{equation}\label{eqA6-2}
\Delta \underline{U} = \Delta W \ge \psi(W) \ge \psi\!\left( 0 \right) = 0 \ge \frac{\psi\!\left( \tfrac{\mathsf c}{2} \right) }{\frac {\mathsf c} 2} \,  \underline{U} \qquad \text{where } W \le \tfrac{ \mathsf c } 2 \, .
\end{equation}
Note that in \eqref{eqA6-1} and \eqref{eqA6-2} we have exploited the monotonicity of $ \psi $ along with the fact that $ \underline{U} \le \frac{\mathsf c}{2} $ by construction. We can therefore deduce that the function  $ \underline{U} $, which is nontrivial and bounded, satisfies
\begin{equation*}
\Delta \underline{U} \ge \lambda \, \underline{U} \qquad \text{in } M
\end{equation*}
along with $ \left\| \underline{U}^+ \right\| _\infty = \frac{\mathsf{c}} 2$, where
$$
\lambda = \frac{\psi\!\left(\frac {\mathsf c}2\right)}{\frac {\mathsf c}2}>0 \, ,
$$
the positivity of the parameter $ \lambda $ being ensured by the fact that $ \psi \in \mathfrak{C}^+ $. Hence, upon applying again Proposition \ref{from-sub-to-sol} in the special case $ \psi(w)=\lambda w $, we obtain a nonnegative, nontrivial, bounded solution $ U $ to $  \Delta U  = \lambda \, U $, which means that $ M $ is stochastically incomplete.
\end{proof}

\begin{proof}[Proof of Theorem \ref{cor-gen-prs}]
\
\smallskip
\\
\noindent \eqref{k}$ \Rightarrow $\eqref{kk} In view of the assumptions on $ f  $, it is plain that the auxiliary function
$$
\tilde{f}(w) := \max_{[w_0,w]} f \qquad \forall w \in [w_0,w_1]
$$
is continuous, nondecreasing and satisfies
\begin{equation}\label{f-g}
\tilde{f}(w_0)=0  \,  , \qquad  \tilde{f}>0 \quad \text{in } (w_0,w_1] \, , \qquad \tilde{f} \ge f \quad \text{in } [w_0,w_1] \, .
\end{equation}
Therefore, the function
\begin{equation}\label{f-g-psi}
\psi(z) :=
\begin{cases}
	0 & \text{if } z < 0 \, , \\
\tilde{f}(z+w_0)   & \text{if } z \in [0,w_1-w_0] \, ,    \\
 \tilde{f}(w_1)  &  \text{if } z>w_1-w_0 \, ,
 \end{cases}
\end{equation}
falls within the class $ \mathfrak{C}^+ $, so that Theorem  \ref{teo-ell}\eqref{j}$ \Rightarrow $\eqref{jj} is applicable, ensuring the existence of a nonnegative, nontrivial and bounded function $ W $ such that
\begin{equation}\label{eq-W}
\Delta W = \psi(W) \qquad \text{in } M \, .
\end{equation}
Moreover, from the corresponding proof, one can see that the $ L^\infty(M) $ norm of $W$ can be tuned to be arbitrarily small; in particular, we may assume that $ \left\| W \right\|_{\infty} \le w_1-w_0  $. Let
$$
w := W+w_0  \, .
$$
By construction, the (essential) image of $ w $ lies in the interval $ [w_0,w_1] $; hence, due to \eqref{f-g}, \eqref{f-g-psi}, \eqref{eq-W} and the definition of $  \psi $, we end up with
$$
\Delta w = \Delta W = \psi(W) = \tilde{f}(W+w_0) = \tilde{f}(w) \ge f(w) \qquad \text{in } M \, .
$$
Also, we observe that $ f(w^\ast)>0 $ because $ f $ is strictly positive in $ (w_0,w_1] $. We have therefore constructed a function $w$ which complies with all the requirements of \eqref{kk} (it has even stronger properties).

\smallskip
\noindent \eqref{kk}$ \Rightarrow $\eqref{k} Let $ w \in L^1_{\mathrm{loc}}(M) $ be as in the statement. First of all, we define the following auxiliary function:
\begin{equation*}\label{f-g-psi-bis}
	\hat{f}(w) :=
	\begin{cases}
		w-w^* + \min_{[w,w^*]} f   & \text{if } w \le  w^*  \, ,   \\
		{f}\!\left(w^*\right)  &  \text{if } w>w^* \, .
	\end{cases}
\end{equation*}
It is plain that $ \hat{f} $ is continuous, nondecreasing and satisfies
\begin{equation*}\label{f-g-bis}
 \hat{f}\!\left(w^*\right) =  {f}\!\left(w^*\right) > 0 \qquad \text{and} \qquad \hat{f} \le f \quad \text{in } \left(-\infty,w^*\right]  .
\end{equation*}
Moreover, by construction, it admits a unique zero, that we call $w_0$. Clearly, $ w_0 < w^\ast $. Because $ \hat{f} $ is less than $f$ in the essential image of $w$, we have that $w$ also satisfies
\begin{equation*}\label{f-w-bis-bis}
	\Delta w \ge \hat f(w)  \qquad \text{in $M$}
\end{equation*}
in the dense of distributions or, equivalently, is a very weak subsolution in the sense of Definition \ref{vw}. Hence, we deduce that the function $ \underline{W} := w-w_0 $ is in turn a very weak subsolution to
\begin{equation*}\label{f-w-ter}
	\Delta \underline W \ge \hat f(\underline W + w_0) =: \psi\!\left( \underline{W} \right)  \qquad \text{in $M$} \, .
\end{equation*}
Since $ \psi \in \mathfrak{C}^+ $, we are in position to apply Proposition \ref{from-sub-to-sol}, which yields the existence of a very weak solution $W$ to \eqref{eqe} that complies with
\begin{equation*}\label{new-bound-soll}
\left( w-w_0 \right)^+	= \underline{W}^+ \le W \le \left\|  \underline{W}^+ \right\|_{L^\infty(M)} = w^\ast - w_0 \qquad \text{in } M \, .
\end{equation*}
The function $W$ being nonnegative and nontrivial, we can conclude that $ M $ is stochastically incomplete as a consequence of Theorem \ref{teo-ell}\eqref{jjj}$ \Rightarrow $\eqref{j}.

\smallskip

\noindent (d$^*$)$ \Rightarrow $\eqref{kk} It is obvious.

\smallskip

\noindent \eqref{k}$ \Rightarrow $(d$^*$) It is enough to observe that, if $ f $ is in addition nondecreasing in the interval $ [w_0,w_1] $, in the proof of \eqref{k}$ \Rightarrow$\eqref{kk} the function $ \tilde{f} $ coincides with $ f $, hence we end up with a solution $w$ to
$$
\Delta w = f(w) \qquad \text{in } M  \, ,
$$
which by construction satisfies $ w_0 \le w \le w_1 $ along with \eqref{f-w-bis}. Finally, the fact that one can actually construct infinitely many such solutions still follows from the arbitrariness of the $ L^\infty(M) $ norm of the auxiliary solution $W$ used in \eqref{k}$ \Rightarrow$\eqref{kk}.
\end{proof}

Before providing the proof of Theorem \ref{teo-par}, we need to recall a useful property in real analysis concerning the concavity of the modulus of continuity, that we prove for the sake of completeness.

\begin{lem}\label{concave-mc}
Let $ \phi: I \to \mathbb{R} $ be a continuous function, where $I$ is a closed and bounded interval. Then there exists a concave, continuous and strictly increasing function $ F: [0,+\infty) \to [0,+\infty) $ such that $ F(0)=0 $, $ \lim_{z \to +\infty} F(z)=+\infty $ and
\begin{equation}\label{cmc}
\left| \phi(u) - \phi(v) \right| \le F\!\left( \left| u-v \right| \right) \qquad \forall u,v \in I \, .
\end{equation}
\end{lem}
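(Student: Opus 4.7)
The plan is to build $F$ by regularizing the standard modulus of continuity of $\phi$ and then adding a linear term to ensure strict monotonicity and the blow-up at infinity. First, since $I$ is compact and $\phi$ is continuous, $\phi$ is uniformly continuous on $I$. I would therefore define the standard modulus
$$
\omega(r) := \sup\!\left\{ \left|\phi(u)-\phi(v)\right| : u,v \in I, \ |u-v|\le r \right\} , \qquad r \ge 0 ,
$$
which is finite, nondecreasing, bounded by $2\|\phi\|_{L^\infty(I)}$, satisfies $\omega(0)=0$, and is continuous at $0$ by uniform continuity. In general $\omega$ need not be concave nor strictly increasing, nor does it tend to $+\infty$, so a further construction is needed.

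The next step is to replace $\omega$ by its least affine majorant, i.e.~define
$$
\hat\omega(r) := \inf\!\left\{ a + b r : a , b \ge 0, \ a+bs \ge \omega(s) \ \forall s \ge 0 \right\} .
$$
Since $\omega$ is bounded, the family is nonempty, and any affine majorant of the nonnegative function $\omega$ on $[0,+\infty)$ is forced to have nonnegative slope. Consequently $\hat\omega$ is nondecreasing, concave, and upper semicontinuous as an infimum of nondecreasing continuous affine functions; being finite and concave it is automatically continuous on $(0,+\infty)$, and upper semicontinuity together with $\hat\omega \ge \omega \ge 0$ yields continuity at $0$ as well. By construction $\hat\omega \ge \omega$ on $[0,+\infty)$.

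The only delicate point is verifying $\hat\omega(0)=0$. Given any $\varepsilon>0$, use continuity of $\omega$ at $0$ to pick $s_0>0$ with $\omega(s) \le \varepsilon$ for $s \in [0,s_0]$, and set $M:=\sup\omega$. Then the affine function
$$
L(s) := \varepsilon + \frac{M}{s_0}\, s
$$
satisfies $L(s) \ge \varepsilon \ge \omega(s)$ on $[0,s_0]$ and $L(s) \ge \varepsilon + M \ge \omega(s)$ on $[s_0,+\infty)$, so $\hat\omega(0) \le L(0)=\varepsilon$. Letting $\varepsilon\to 0^+$ gives $\hat\omega(0)=0$.

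Finally I would define
$$
F(r) := \hat\omega(r) + r , \qquad r \ge 0 .
$$
Then $F$ is continuous on $[0,+\infty)$, concave as a sum of concave functions, strictly increasing (thanks to the linear term, since $\hat\omega$ is nondecreasing), satisfies $F(0)=0$ and $F(+\infty)=+\infty$, and
$$
\left|\phi(u)-\phi(v)\right| \le \omega(|u-v|) \le \hat\omega(|u-v|) \le F(|u-v|) \qquad \forall u,v \in I ,
$$
which is \eqref{cmc}. The main technical obstacle is the endpoint analysis of $\hat\omega$, namely the identities $\hat\omega(0)=0$ and right-continuity at the origin; once those are in hand the remaining verifications are immediate.
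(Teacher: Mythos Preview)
Your proof is correct and follows essentially the same strategy as the paper: both take the concave envelope of the standard modulus of continuity $\omega$ and then add the identity function $r\mapsto r$ to force strict monotonicity and divergence at infinity. The only difference is that you describe the concave envelope dually, as the infimum of affine majorants $\hat\omega(r)=\inf\{a+br:a+bs\ge\omega(s)\ \forall s\ge 0\}$, whereas the paper uses the primal description $\sup\{\lambda\omega(z_1)+(1-\lambda)\omega(z_2):z=\lambda z_1+(1-\lambda)z_2\}$; for the bounded function $\omega$ these coincide, and your version has the advantage of making the verifications $\hat\omega(0)=0$ and continuity at the origin explicit (the paper only says ``it is not difficult to check'').
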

\begin{proof}
Since $ \phi $ is continuous in a compact interval, it is well known that its modulus of continuity
$$
\omega (\delta) := \max_{u,v \in I: \, |u-v| \le \delta} \left| \phi(u) - \phi(v) \right|  \qquad \forall \delta \ge 0
$$
is in turn a continuous function from $ [0, + \infty) $ to $ [0,+\infty) $ with $\omega(0)=0$, which is also nondecreasing and satisfies
\begin{equation*}
\left| \phi(u) - \phi(v) \right| \le \omega\!\left( \left| u-v \right| \right) \qquad \forall u,v \in I \, .
\end{equation*}
In order to exhibit a function $F$ as in the statement, one can simply take the concave envelope of (the graph of) $ \omega $ and add to it the identity function:
$$
F(z) := z + \sup_{\lambda \in [0,1] , \, z_1 , z_2 \in [0,+\infty)} \left\{ \lambda \, \omega(z_1) + (1-\lambda) \, \omega(z_2) : \ z = \lambda \, z_1 + (1-\lambda) \, z_2 \right\} \qquad \forall z \ge 0 \, .
$$
It is not difficult to check that $ F $ is still continuous, strictly increasing and fulfills $ F(0)=0 $ and $ \lim_{z \to +\infty} F(z)=+\infty $. Moreover, it satisfies \eqref{cmc} (just note that $ \omega \le F $) and it is concave by construction.
\end{proof}

\begin{proof}[Proof of Theorem \ref{teo-par}]
\ \smallskip \\
\noindent \eqref{i}$ \Rightarrow $\eqref{ii} Since $ M $ is stochastically  incomplete, we know that there exists a nonnegative, nontrivial and bounded function $ U $ such that
\begin{equation}\label{basic-u}
\Delta U= U \qquad \text{in } M \, .
\end{equation}
For future convenience, and without loss of generality, we will assume that
\begin{equation*}\label{E1}
\left\| U \right\|_{L^\infty(M)} = 1 \, ,
\end{equation*}
which is always feasible by the linearity of \eqref{basic-u}. Let us set
\[
\Omega:=\left\{x\in M : \  U(x)>\tfrac 12 \right\} .
\]
 Because $U$ is at least continuous (it is in fact smooth), we observe that $ \Omega $ is open in $ M $, although we will never exploit this property explicitly. What actually matters is that it is a set of positive measure, along with all the superlevel sets of $ U $ up to the level $1$. Let us define
\[
V := 2 \left( U -\tfrac 1 2 \right) .
\]
By construction, the function $ V $ is also smooth and fulfills
\begin{equation}\label{eq-propV}
0 <V \le 1 \quad \text{in } \Omega \, , \qquad  V \le 0 \quad \text{in } \Omega^c \, , \qquad \left\| V \right\|_{L^\infty(\Omega)} = 1 \,  .
\end{equation}
Since $ \Delta U \ge \frac{1}{2} $ in $ \Omega $, we infer that $ V $ satisfies
\begin{equation}\label{system-V}
\Delta V \ge 1  \qquad \text{in }  \Omega \, ;
\end{equation}
in particular, it admits no maximum in $\Omega$ (hence $V \le 1$ is in fact a strict inequality). We now introduce, by means of $V$, two auxiliary functions that will be key to our purposes. Given an arbitrary $ \alpha \in \mathbb{R} $, we let $ \beta = \phi(\alpha)  $, and for a constant $c>0$ to be chosen later we set
\begin{equation}\label{def-z}
	Z:= c \left( V -  1 \right) +  \beta \, .
\end{equation}
Thanks to \eqref{eq-propV}, we have that
\begin{equation}\label{eq27-bis}
Z < \beta \quad \text{in } {\Omega} \, , \qquad  Z \le  -c+ \beta \quad \text{in } \Omega^c \, ,
\end{equation}
whereas from \eqref{system-V} and \eqref{basic-u} it follows that
\begin{equation}\label{eq27-quater}
	\Delta Z \ge c \quad \text{in } \Omega \, , \qquad \Delta Z \ge 0 \quad \text{in } \Omega^c  \, .
\end{equation}
Similarly, for another constant $ \kappa>0 $ to be chosen later, we set
\begin{equation}\label{def-y}
	Y := \kappa \left( 1 -  V \right) +  \beta \, .
\end{equation}
As in \eqref{eq27-bis} and \eqref{eq27-quater}, it is readily seen that $Y$ satisfies
\begin{equation*}\label{eq27-bis.Y}
	Y > \beta \quad \text{in } {\Omega} \, , \qquad  Y \ge  \kappa + \beta \quad \text{in } \Omega^c \, ,
\end{equation*}
and
\begin{equation*}\label{eq27-quater-Y}
	-\Delta Y \ge \kappa \quad \text{in } \Omega \, , \qquad -\Delta Y \ge 0 \quad \text{in } \Omega^c  \, .
\end{equation*}
Let $ \{ E_k \} \subset \mathcal{P}(M) $ be a regular exhaustion. For every $i,j \in \mathbb{N}$, we consider the following approximate initial data:
\begin{equation}\label{approx-datum}
u_{0,i,j} := \alpha + (u_0-\alpha)^+ \chi_{E_i} - (u_0-\alpha)^-\chi_{E_j} \, ;
\end{equation}
it is readily seen that
\begin{equation}\label{approx-datum-monotone}
u_0 \wedge \alpha \le u_{0,i,j} \le u_{0,i+1,j}  \, , \qquad 	u_{0,i,j+1} \le u_{0,i,j} \le u_0 \vee \alpha \, .
\end{equation}
Consider the following Cauchy-Dirichlet problems:
\begin{equation}\label{cd-up}
\begin{cases}
\partial_t u_{k,i,j} = \Delta \phi\!\left(u_{k,i,j}\right) & \text{in } E_k \times (0,+\infty) \, ,\\
\phi(u_{k,i,j}) = \beta  & \text{on } \partial E_k \times (0, +\infty ) \, ,\\
u_{k,i,j}=u_{0,i,j} & \text{on } E_k \times \{0\} \, .
\end{cases}
\end{equation}
As in Remark \ref{rem-nonhom}, upon setting $ \phi_\alpha (v) := \phi(v+\alpha) - \beta  $, $ v_{0,i,j} := u_{0,i,j} - \alpha $, $ v_{k,i,j} := u_{k,i,j} - \alpha$, it is plain that $ u_{k,i,j}  $ is the solution to \eqref{cd-up} if and only if $ v_{i,j,k} $ is the solution to the homogeneous problem
\begin{equation}\label{cd-up-v}
	\begin{cases}
		\partial_t v_{k,i,j} = \Delta \phi_\alpha\!\left(v_{k,i,j}\right) & \text{in } E_k \times (0,+\infty) \, ,\\
	    \phi_\alpha(v_{k,i,j}) = 0  & \text{on } \partial E_k \times (0, +\infty ) \, ,\\
		v_{k,i,j}=v_{0,i,j} & \text{on } E_k \times \{0\} \, .
	\end{cases}
\end{equation}
Proposition \ref{exiconstructed} is thus applicable to \eqref{cd-up-v}, guaranteeing that the corresponding very weak solutions exist (in the sense of Definition \ref{pap}); moreover, in view of the monotonicity inequalities in  \eqref{approx-datum-monotone} and item (iii) of the just cited proposition, such solutions preserve the ordering of the initial data:
\begin{equation}\label{ord-v-k}
v_{k,i,j} \le v_{k,i+1,j}  \quad \text{and} \quad v_{k,i,j+1} \le v_{k,i,j} \qquad \text{in } E_k \times (0,+\infty) \, .
\end{equation}
In addition, since
$$
- \left\| u_0-\alpha \right\|_{L^\infty(M)} \le -\left( u_0 - \alpha \right)^- \le v_{0,i,j} \le \left( u_0 - \alpha \right)^+   \le \left\| u_0-\alpha \right\|_{L^\infty(M)} ,
$$
as a straightforward consequence of Proposition \ref{pcpro2} we infer that
\begin{equation}\label{ord-v-k-bdd}
- \left\| u_0-\alpha \right\|_{L^\infty(M)} \le v_{k,i,j} \le  \left\| u_0-\alpha \right\|_{L^\infty(M)} \qquad \text{in } E_k \times (0,+\infty) \, ,
\end{equation}
because the constant functions $ - \left\| u_0-\alpha \right\|_{\infty} $ and $\left\| u_0-\alpha \right\|_{\infty}$ are a subsolution and, respectively, a supersolution to \eqref{cd-up-v}. Note that, by construction, we have that each $ v_{0,i,j} $ is bounded with compact support in $M$. Hence, we can appeal to Proposition \ref{exi-sign-ch}, which ensures that there exists some function $v_{i,j}$ such that (up to subsequences and the obvious extensions outside $E_k$)
\begin{equation}\label{prop-bre-approx-1}
	v_{k,i,j} \underset{k \to \infty}{\longrightarrow} v_{i,j} \ \ \text{weakly$^*$ in } L^\infty(M \times (0,+\infty)) \ \ \ \text{and} \ \ \ \phi_\alpha\!\left(v_{k,i,j}\right) \underset{k \to \infty}{\longrightarrow} \phi_\alpha\!\left(v_{i,j}\right) \ \ \text{in } L^2_{\mathrm{loc}}(M \times [0,+\infty))  ;
\end{equation}
furthermore, each $v_{i,j}$ is a very weak solution  to the Cauchy problem
\begin{equation}\label{pp-1}
	\begin{cases}
	\partial_t	v_{i,j} =\Delta \phi_\alpha\!\left(v_{i,j}\right) & \text{in } M \times(0,+\infty) \, , \\
		v_{i,j} = v_{0,i,j} & \text{on } M \times \{ 0 \} \, .
	\end{cases}
\end{equation}
Note that, via a standard diagonal procedure, the subsequence under which \eqref{prop-bre-approx-1} holds can be taken to be independent of $i$ and $j$. Therefore, as a consequence of \eqref{ord-v-k}, the solutions $ \left\{  v_{i,j} \right\} $ preserve ordering:
\begin{equation*}\label{ord-v-ij}
	v_{i,j} \le v_{i+1,j}  \quad \text{and} \quad v_{i,j+1} \le v_{i,j} \qquad \text{in } M \times (0,+\infty) \, ,
\end{equation*}
as well as uniform boundedness:
\begin{equation*}\label{ord-v-ij-bdd}
	- \left\| u_0-\alpha \right\|_{L^\infty(M)} \le v_{i,j} \le  \left\| u_0-\alpha \right\|_{L^\infty(M)} \qquad \text{in } M \times (0,+\infty) \, .
\end{equation*}
Hence, by letting first $ i \to \infty $ and then $ j \to \infty $, recalling \eqref{approx-datum}, we can pass to the limit twice in \eqref{pp-1} by monotonicity (first increasing then decreasing), to obtain a very weak solution $v$ to the Cauchy problem
\begin{equation*}\label{pp-2}
	\begin{cases}
		\partial_t	v =\Delta \phi_\alpha\!\left(v\right) & \text{in } M \times(0,+\infty) \, , \\
		v = u_0-\alpha & \text{on } M \times \{ 0 \} \, .
	\end{cases}
\end{equation*}
By undoing the changes of variables $ \phi(u) = \phi_\alpha (u-\alpha) + \beta  $, $ u = v + \alpha$ and integrating by parts, we easily infer that $u$ is a very weak solution to the original Cauchy problem \eqref{pp} with $ T=+\infty $. Our next goal is to show that, by virtue of the boundary condition prescribed in \eqref{cd-up}, such a solution has a specific spatial asymptotic behavior. More precisely, we claim that
\begin{equation}\label{eee2-limit}
	\int_0^S \phi(u(\cdot,t)) \, dt  \ge S Z \qquad \text{in } M
\end{equation}
and
\begin{equation}\label{eee2-limit-below}
	\int_0^S \phi(u(\cdot,t)) \, dt  \le S Y \qquad \text{in } M
\end{equation}
for suitable choices of the parameters $ c $ and $\kappa$, where $ S \in (0,T) $ is fixed once for all and we recall that $ Z$ (resp.~$Y $) is defined in \eqref{def-z} (resp.~\eqref{def-y}). First, let us explain why the claim yields the thesis. For every $ \varepsilon>0 $, let
\begin{equation}\label{def-omega-eps}
\Omega_\varepsilon := \left\{ x \in M : \  1-\varepsilon <  V(x) \le 1  \right\} ,
\end{equation}
which has positive measure by construction. Due to \eqref{eee2-limit} and \eqref{eee2-limit-below}, we can deduce that
\begin{equation}\label{omega-eps}
 S \left( - c \, \varepsilon + \beta \right)  \le	\int_0^S \phi(u(\cdot,t)) \, dt  \le S \left( \kappa \, \varepsilon + \beta \right) \qquad \text{in } \Omega_\varepsilon \, .
\end{equation}
From \eqref{omega-eps} and the arbitrariness of $\varepsilon$ (note that $ \Omega_\varepsilon $ is independent of $ \beta $), it is apparent that different values of $ \beta $ give rise to different solutions; because $\beta$ is an arbitrary element of $ \phi(\mathbb{R}) $ and $ \phi $ is a continuous and nonconstant function, the thesis follows.

We are thus left with establishing \eqref{eee2-limit} and \eqref{eee2-limit-below}; to this end, we introduce the integral functions
$$
\mathcal  F_{k,i,j} := \int_0^S \phi\!\left(u_{k,i,j}(\cdot,t)\right) dt \, .
$$
Let us start from \eqref{eee2-limit}. Upon integrating \eqref{cd-up} in time, one sees that these functions satisfy the elliptic problems
\begin{equation}\label{eq31-modif}
\begin{cases}
\Delta \mathcal F_{k,i,j} = u_{k,i,j}(\cdot, S)- u_{0,i,j}  & \text{in } E_k \, ,\\
\mathcal F_{k,i,j} = S \beta   & \text{on } \partial E_k \, .
\end{cases}
\end{equation}
We point out that $ u_{k,i,j}(\cdot,S) $ is well defined thanks to the continuity properties of the solutions $ v_{k,i,j} $, which follow from the statement of Proposition \ref{exiconstructed}. Moreover, by virtue of \eqref{energy-local-integ} (applied to $ v_{k,i,j} $), we can assert that $ \mathcal F_{k,i,j} \in H^1(E_k) $, thus \eqref{eq31-modif} is certainly satisfied at least in the weak sense. Due to \eqref{ord-v-k-bdd}, we have that
\begin{equation}\label{eq31-ineq-u}
\Delta \mathcal F_{k,i,j} \le 2 \left\| u_0 - \alpha \right\|_{L^\infty(M)}  \quad \text{and} \quad \mathcal F_{k,i,j} \ge S \, \phi\big( \alpha -\left\| u_0 - \alpha \right\|_{L^\infty(M)} \big) \qquad \text{in } E_k \, .
\end{equation}
Now, if we choose $ c>0 $ to be any constant satisfying
\begin{equation}\label{eq31-ineq-z-in}
 S c \ge  2 \left\| u_0 - \alpha \right\|_{L^\infty(M)} \qquad  \text{and} \qquad - c+ \beta  \le \phi\big( \alpha -\left\| u_0 - \alpha \right\|_{L^\infty(M)} \big) \, ,
\end{equation}
and let
$$
G := S Z - \mathcal F_{k,i,j} \in H^1(E_k) \, ,
$$
thanks to \eqref{eq27-quater} and the left inequalities in \eqref{eq31-ineq-u} and \eqref{eq31-ineq-z-in} we see that the latter function satisfies
\begin{equation}\label{eee1}
\Delta  G \ge H :=
\begin{cases}
0 & \text{in } \Omega \cap E_k \, , \\
- 2 \left\| u_0 - \alpha \right\|_{L^\infty(M)}  & \text{in } E_k \setminus \Omega \, ,
\end{cases}
\end{equation}
weakly in $ E_k $. Furthermore, thanks to \eqref{eq27-bis}, the boundary condition in \eqref{eq31-modif}, the right inequalities in \eqref{eq31-ineq-u} and \eqref{eq31-ineq-z-in}, it is plain that $ G \le 0 $ in $ \left( E_k \setminus \Omega \right) \cup \partial E_k $. Therefore, if we test \eqref{eee1} with the function $ -G^+ \in H^1_0(E_k) $, we end up with
\begin{equation}\label{subsol-A}
\int_{E_k} \left| \nabla  G^+ \right|^2 \, d\V \le  - \int_{E_k} H \, G^+ \, d\V = - \int_{\Omega \cap E_k} H \, G^+ \, d\V = 0 \, ,
\end{equation}
which readily entails $ G^+ = 0 $, that is $ SZ \le \mathcal F_{k,i,j} $ in the whole $ E_k $:
\begin{equation}\label{eee2}
 \int_0^S \phi\!\left(u_{k,i,j}(\cdot,t)\right)  dt  \ge S Z \qquad \text{in }  E_k \, .
\end{equation}
In view of \eqref{prop-bre-approx-1} and the monotonicity of $ \left\{ u_{k,i,j} \right\} $ with respect to the indexes $i$ and $j$, it is plain that \eqref{eee2} is stable under passage to the limit as $ k,i,j \to \infty $, whence \eqref{eee2-limit}. Finally, let us deal with \eqref{eee2-limit-below}. Still in view of \eqref{ord-v-k-bdd}, we have that
\begin{equation*}\label{eq31-ineq-u-below}
	\Delta \mathcal F_{k,i,j} \ge -2 \left\| u_0 - \alpha \right\|_{L^\infty(M)}  \quad \text{and} \quad \mathcal F_{k,i,j} \le S \, \phi\big( \alpha +\left\| u_0 - \alpha \right\|_{L^\infty(M)} \big) \qquad \text{in } E_k \, .
\end{equation*}
Therefore, if we let $\kappa$ be so large that
$$
 S \kappa \ge  2 \left\| u_0 - \alpha \right\|_{L^\infty(M)} \qquad  \text{and} \qquad \kappa + \beta  \ge \phi\big( \alpha  + \left\| u_0 - \alpha \right\|_{L^\infty(M)} \big) \, ,
$$
set
$$
J := \mathcal F_{k,i,j} - S Y  \in H^1(E_k)
$$
and reason exactly as above, we find that also $J$ satisfies
\begin{equation*}\label{eee1-below}
	\Delta  J \ge H \qquad \text{weaky in } E_k \, .
\end{equation*}
Since $ J \le 0 $ in $ \left( E_k \setminus \Omega \right) \cup \partial E_k $, by applying \eqref{subsol-A} with $G$ replaced by $J$ we infer that $ J^+=0 $, namely  $ \mathcal F_{k,i,j} \le SY  $ in $ E_k $, i.e.~the analogue of \eqref{eee2} from above. Upon taking limits as $ k,i,j \to \infty $, we reach \eqref{eee2-limit-below}.

\smallskip
\noindent \eqref{ii}$ \Rightarrow $\eqref{iii} It is obvious.

\smallskip
\noindent \eqref{iii}$ \Rightarrow $\eqref{i} Thanks to Proposition \ref{exi-mini}, we can assume without loss of generality that there exist two different solutions $ u,v \in L^\infty(M \times (0,T)) $ to the Cauchy problem \eqref{pp} which are also ordered, say $ v \ge u $. By multiplying $ (v-u) $ by $ e^{-t} $ and differentiating with respect to time, we obtain the following identity:
\begin{equation}\label{eq:ww1}
\partial_t \! \left[ e^{-t} \left( v - u \right) \right] = e^{-t} \left[ \Delta \phi(v)  - \Delta \phi(u) \right] - e^{-t} \left( v - u \right) ,
\end{equation}
to be interpreted in the very weak sense over $ M \times (0,T) $. Now, let us integrate \eqref{eq:ww1} between $ t=0 $ and $ t = T $. By exploiting the fact that $ v \ge u $ and $ v(\cdot,0) = u(\cdot,0) = u_0 $, we end up with the inequality
\begin{equation}\label{eq:g1}
\Delta \left[ \int_0^{T} \left[ \phi(v(\cdot,t)) - \phi(u(\cdot,t)) \right] \hat{e}(t) \, dt \right] \ge \int_0^{T} \left[ v(\cdot,t) - u(\cdot,t) \right] \hat{e}(t) \, dt \, ,
\end{equation}
still in the very weak sense over $ M $, where we set
$$
\hat{e}(t) :=
\begin{cases}
\frac{e^{-t}}{1-e^{-T}} & \text{if } T < + \infty \, , \\
e^{-t} & \text{if } T=+\infty \, .
\end{cases}
$$
Note that, rigorously, these computations can be justified by taking a separable test function of the form $ \xi(x,t) = \hat{e}(t) \, \eta(x)  $ in \eqref{def-sol} and cutting it off at $ t=T $, where $ \eta \in C^2_c(M) $, with $ \eta \ge 0 $, is arbitrary. Clearly, we have that
$$
 -\left(\left\| u \right\|_{L^\infty(M \times (0,T))} \vee \left\| v \right\|_{L^\infty(M \times (0,T))} \right) \le  u \le v \le \left\| u \right\|_{L^\infty(M \times (0,T))} \vee \left\| v \right\|_{L^\infty(M \times (0,T))}  =: \ell < +\infty \, .
$$
As a result, Lemma \ref{concave-mc} is applicable to $ \phi $ in the interval $ I= [-\ell,\ell] $, ensuring the existence of a concave, continuous and strictly increasing function $ F: [0,+\infty) \to [0,+\infty) $ such that (recall that $v \ge u$ and $ \phi $ is nondecreasing)
$$
\phi(v(x,t)) - \phi(u(x,t)) \le F(v(x,t)-u(x,t)) \qquad \text{for a.e. } (x,t) \in M \times (0,T) \, ,
$$
that is, $ F $ being bijective between $ [0,+\infty) $ and itself,
$$
F^{-1}\!\left( \phi(v(x,t)) - \phi(u(x,t)) \right) \le v(x,t)-u(x,t) \qquad \text{for a.e. } (x,t) \in M \times (0,T) \, ,
$$
which entails
\begin{equation}\label{eq:ah1}
\int_0^{T} F^{-1} \!\left( \phi(v(x,t)) - \phi(u(x,t)) \right) \hat{e}(t) \, dt \le \int_0^{T} \left[ v(x,t) - u(x,t) \right] \hat{e}(t) \, dt \qquad \text{for a.e. } x \in M
\end{equation}
upon integration against $ \hat{e}(t) $ over $ (0,T) $. On the other hand, since $ F^{-1} $ is convex and $ \hat{e}(t)  $ is a probability density on $ (0,T) $, by Jensen's inequality we deduce from \eqref{eq:ah1} that
\begin{equation}\label{eq:ah2}
F^{-1} \! \left( \int_0^{T} \left[ \phi(v(x,t)) -  \phi(u(x,t)) \right] \hat{e}(t) \, dt \right) \le \int_0^{T} \left[ v(x,t) - u(x,t) \right] \hat{e}(t) \, dt  \qquad \text{for a.e. } x \in M \, .
\end{equation}
Hence, in view of \eqref{eq:g1} and \eqref{eq:ah2}, it holds
\begin{equation*}\label{eq:g2}
\begin{aligned}
 \Delta \left[ \int_0^{T} \left[ \phi(v(\cdot,t)) - \phi(u(\cdot,t)) \right] \hat{e}(t)  \, dt \right] \ge F^{-1} \! \left( \int_0^{T} \left[ \phi(v(\cdot,t)) -  \phi(u(\cdot,t)) \right] \hat{e}(t) \, dt \right) .
\end{aligned}
\end{equation*}
We have therefore shown that the function
$$
\underline{W}(x) :=  \int_0^{T} \left[ \phi(v(x,t)) - \phi(u(x,t)) \right] \hat{e}(t)  \, dt  \, ,
$$
which is nonnegative (recall that $\phi$ is nondecreasing) and bounded by construction, satisfies
$$
\Delta \underline{W} \ge F^{-1}(\underline{W}) \qquad \text{in } M
$$
very weakly, where $ F^{-1} $ is a strictly increasing and continuous function on $[0,+\infty)$ with $ F^{-1}(0) = 0 $. Moreover, and most importantly, we can also assert that $ \underline{W}  $ is nontrivial: indeed, if $ \underline{W} = 0 $ identically, then from  \eqref{eq:g1} we would infer that
$$
\int_0^{T} \left[ v(\cdot,t) - u(\cdot,t) \right] \hat{e}(t) \, dt \le 0 \, ,
$$
which is a contradiction to the fact that $v \ge u$ and $v$ is essentially different from $u$. Hence, thanks to Theorem \ref{teo-ell}\eqref{jjj}$ \Rightarrow $\eqref{j}, it follows that $ M $ is stochastically incomplete.
\end{proof}

\begin{proof}[Proof of Corollary \ref{cor-pme}]
First of all, one sees that the proof of Theorem \ref{teo-par}\eqref{i}$ \Rightarrow $\eqref{ii} can be reproduced verbatim with time origin shifted to $s$ and $ S=t-s $, yielding the two-sided estimate
\begin{equation}\label{eee2-shift}
 (t-s) \left( - c \, \varepsilon + \beta \right)  \le	\int_s^t \phi(u(\cdot,\tau)) \, d\tau  \le (t-s) \left( \kappa \, \varepsilon + \beta \right) \qquad \text{in } \Omega_\varepsilon \, ,
\end{equation}
where $ \Omega_\varepsilon $ has been defined in \eqref{def-omega-eps}. Let $ \beta $ be any value in $ \phi(\mathbb{R}) \setminus \left\{ \phi(0) \right\} $, and let $ \varepsilon>0 $ be so small that
\begin{equation}\label{small-eps-1}
(c \vee \kappa) \, \varepsilon <   \frac{\left| \beta - \phi(0) \right|}{2} \, .
\end{equation}
Given an arbitrary smooth domain $ E \Subset M $, we observe that the set
$$
E^c \cap \Omega_\varepsilon
$$
necessarily has positive measure. Indeed, if this was not the case, it would imply that $ \Omega_\varepsilon \subseteq E $ up to a negligible set and therefore
$$
V \le 1-\varepsilon \qquad \text{on } \partial E \, ,
$$
which in turn entails $  V \le 1-\varepsilon$ in the whole $ E $ as $V$ is subharmonic, a clear contradiction to the definition of $ \Omega_\varepsilon $. As a result, thanks to \eqref{eee2-shift} and \eqref{small-eps-1}, we can assert that
$$
\int_s^t \phi(u(\cdot,\tau)) \, d\tau \neq (t-s) \, \phi(0) \qquad \text{a.e. in } E^c \cap \Omega_\varepsilon \, ,
$$
whence property \eqref{lack-supp} follows thanks to the arbitrariness of $E$.
Note that the latter is enjoyed by the infinitely many solutions constructed in Theorem \ref{teo-par}, except at most the one corresponding to $ \beta=\phi(0) $.
\end{proof}

\begin{oss}[When $M$ is not connected]\rm \label{non-conn}
	In the above proofs, the connectedness of $M$ is used only when we appeal to the existence of a regular exhaustion $ \{ E_k \} \subset \mathcal{P}(M)$. On the other hand, even if $M$ is not connected, it is plain that it is stochastically incomplete if and only if it admits at least one stochastically incomplete (hence noncompact) connected component. Therefore, there is no loss of generality in assuming that $M$ is in addition a connected manifold.
\end{oss}

\section*{Declarations}

\noindent \textbf{Competing interests.} The four authors declare that none of them has any competing interests.

\medskip

\noindent \textbf{Funding.}  G.G., M.M.~and F.P.~were financially supported by the projects ``Partial Differential Equations and Related Geometric-Functional Inequalities'' (grant no.~20229M52AS) and ``Geometric-Analytic Methods for PDEs and Applications (GAMPA)'' (grant no.~2022SLTHCE), funded by European Union -- Next Generation EU within the PRIN 2022 program (D.D.~104 - 02/02/2022 Ministero dell’Università e della Ricerca -- Italy). K.I.~was financially supported by JSPS KAKENHI Grant Number 19H05599 (Japan).

\end{document}